\theoremstyle{plain}
\newtheorem{theorem}{Theorem}[section]
\newtheorem{lemma}[theorem]{Lemma}
\newtheorem{proposition}[theorem]{Proposition}
\theoremstyle{definition}
\newtheorem{definition}[theorem]{Definition}
\newtheorem{remark}[theorem]{Remark}
\numberwithin{equation}{section}
\DeclareMathOperator{\supp}{supp}
\DeclareMathOperator{\DIV}{div}
\DeclareMathOperator{\dist}{dist}
\title[Superharmonic functions and locally renormalized solutions]{Equivalence between Superharmonic functions and renormalized solutions for the  equations with $(p,q)$-growth}
\author[Y. Li, C. Zhang]{Ying Li, Chao Zhang$^*$}
\address{Ying Li\newline
School of Mathematics, Harbin Institute of Technology, Harbin 150001, China\newline
\texttt{lymath@hit.edu.cn}}
\address{Chao Zhang\newline
 School of Mathematics and Institute for Advanced Study in Mathematics, Harbin Institute of Technology, Harbin 150001, China
\newline
\texttt{czhangmath@hit.edu.cn}}
\thanks{$^*$ Corresponding author.}
\thanks{{\bf Keywords}: Equivalence; measure data problems; $(p, q)$-gowth; renormalized solutions; superharmonic functions.}
\thanks{{\bf MSC 2020}: 35J62, 35J70, 31C15, 31C05}
\begin{document}
\begin{abstract}
We establish the equivalence between superharmonic functions and locally renormalized solutions for the elliptic measure data problems with \((p, q)\)-growth. By showing that locally renormalized solutions are essentially bounded below and using Wolff potential estimates, we extend the results of [T. Kilpel\"{a}inen, T. Kuusi, A. Tuhola-Kujanp\"{a}\"{a}, Superharmonic functions are locally renormalized solutions, Ann. Inst. H. Poincar\'{e} C Anal. Non Lin\'{e}aire, 2011]  to a broader class of problems. Our work provides the first equivalence result between locally renormalized solutions and superharmonic functions for the nonstandard growth equations.
\end{abstract}
\maketitle
\section{Introduction}
Let $\Omega$ be a bounded open domain in $\mathbb{R}^n$. We consider the following problem
\begin{equation}\label{eq:main}
-\DIV\left(|\nabla u|^{p-2}\nabla u+|\nabla u|^{q-2}\nabla u\right)=\mu\quad \text{in}~\Omega,
\end{equation}
where $\mu$ is a nonnegative Radon measure and $1<p\leq q\leq n$. The study of equations with growth between two power-type functions originated from the pioneering work of Marcellini~\cite{ Mar89, Mar91JDE, Mar96, Mar96re}. Since then, such equations have been investigated from a variety of perspectives in the literature~\cite{Bogelein13ARMA, Carozza11poincare, Colombo15ARMA2, colombo15arma, CMM-25-JDE, Schemm11Esaim, Schmidt08Cvpde, Schmidt09ARMA}.
In this paper, we focus on the connection between the superharmonic functions and renormalized solutions for this type of equation when the right-hand side is given by nonnegative measure data.

The concept of renormalized solutions was first introduced by DiPerna and Lions in their seminal work~\cite{DiPerna89on} on the Boltzmann equation. Subsequently, this notion has been extended to the study of some nonlinear elliptic and parabolic problems~\cite{Blanchard97, Boccardo93, Dal99As}.  In particular, Dal Maso et al.~\cite{Dal99As} investigated the existence of renormalized solutions for $p$-growth elliptic equations, where the right-hand side is a bounded measure defined on $\Omega$. For further results on the existence of renormalized solutions for elliptic and parabolic equations with measure data, we refer to ~\cite{Betta02jmpa, Blanchard13MA, Petitta08AMPA, Petitta11jee}.
We mention that Chlebicka~\cite{Chlebicka23mea} established the existence of renormalized solutions for elliptic equations with measure data and generalized Orlicz growth, and also proved that the renormalized solutions and approximable solutions are equivalent. Besides, there are also numerous works devoted to the study of renormalized solutions in the case that $\mu$ belongs to $L^1$, see~\cite{Chlebicka19JDE, Gwiazda18JDE, Gwiazda12JDE, Li24DCDS}.

It is worth pointing out that in the case of nonnegative measure data, Kilpel\"{a}inen and Mal\'{y} \cite{Kil92Degen} established a clear connection between the existence theory and nonlinear potential theory. It was shown that every nonnegative measure induces a superharmonic function for the $p$-growth equation with $p>1$ and the obtained solutions are SOLA as well. Recently, Chlebicka et al.~\cite{CGZ-24} further extended the above results, establishing the existence of superharmonic functions for measure data problems involving operators with Orlicz growth and providing a Wolff potential estimate for these superharmonic functions. The Wolff potential estimate plays a crucial role in potential theory and has become an active area of research in recent years~\cite{Benyaiche23pa, Chlebicka23cvpde, kuusi16non, Kuusi15CMP}. Specifically, Kilpel\"{a}inen et al.~\cite{Kil11super}, based on the potential theory, established the equivalence between superharmonic functions and renormalized solutions for $p$-growth elliptic equations with a non-negative measure as the right-hand side term.  The results mentioned above motivate us to investigate the connection between superharmonic functions and renormalized solutions for more general types of equations.

This paper aims to prove that for the elliptic equation with $(p, q)$-growth and nonnegative measure data, the local renormalized solutions have a superharmonic representation, and further establishes that the superharmonic functions are also renormalized solutions, thereby establishing the equivalence between the local renormalized solutions and superharmonic functions. To prove that a local renormalized solution $u$ has a superharmonic representation, it is essential to show that $u$ is bounded below. Inspired by the literature \cite{Kil94the,kim25wolff}, we obtain the desired estimate by selecting appropriate test functions. The method we developed can be applied to more general equations, provided that the integrability of the renormalized solution is sufficiently regular. Although our approach closely follows that of \cite{Kil11super}, it must address the challenges posed by the increased generality of the growth. We remark that this is the first equivalence result between superharmonic functions and renormalized solutions for the equations with nonstandard growth. 

\smallskip

Our main results are as follows.

\begin{theorem}\label{th:renor}
	Suppose that $u$ is a local renormalized solution to \eqref{eq:main} in $\Omega$ with a  nonnegative Radon measure $\mu$. Then there is a $q$-superharmonic function $\tilde{u}$ such that $\tilde{u}=u$ almost everywhere.
\end{theorem}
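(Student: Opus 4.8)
The plan is to adapt the strategy of Kilpel\"{a}inen--Kuusi--Tuhola-Kujanp\"{a}\"{a} to the operator $A(\nabla u)=|\nabla u|^{p-2}\nabla u+|\nabla u|^{q-2}\nabla u$. The argument splits into two parts: (i) prove that $u$ is essentially bounded below on every ball $B\Subset\Omega$; (ii) use this to build a lower semicontinuous, $q$-superharmonic representative of $u$, first locally and then globally by patching. The only external inputs are the comparison/regularity theory for (super)solutions of the $(p,q)$-equation, the a priori local integrability (Marcinkiewicz-type) estimates for local renormalized solutions, and the Wolff potential estimate recalled above.

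For part (i), fix $B_{2r}\Subset\Omega$ and, for large $\lambda>0$ and a truncation threshold $T>0$, test the renormalized formulation with $\phi=\eta^q\big((-u-\lambda)_+\wedge T\big)$, where $\eta$ is a cutoff equal to $1$ on $B_r$ and supported in $B_{2r}$; this $\phi$ is a bounded Lipschitz function of $u$ times a cutoff, vanishing on $\{u>-\lambda\}$, hence admissible, and $\phi\ge 0$. Since $\mu\ge 0$ the right-hand side $\int\phi\,\diff\mu$ is nonnegative, and since $A(\nabla u)\cdot\nabla u=|\nabla u|^p+|\nabla u|^q\ge 0$, moving the good term to the left and estimating the cross term with Young's inequality (separately in the $p$- and $q$-scalings, with the $p$-part absorbed by the $q$-part on the relevant sublevel set) yields a Caccioppoli inequality for $(-u-\lambda)_+\wedge T$ on $\{u<-\lambda\}\cap B_r$. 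A De Giorgi iteration over the levels, using the Sobolev inequality with exponent $q$ and the fact that $|\{u<-\lambda\}\cap B_{2r}|\to 0$ as $\lambda\to\infty$ (since $u$ is finite a.e. and locally in some Lebesgue space), shows $(-u-\lambda_0)_+=0$ a.e. in $B_r$ for $\lambda_0$ large, i.e. $u\ge -M$ a.e. in $B_r$ with $M=M(r,\mu,n,p,q,\|u\|_{L^s(B_{2r})})$; the required regularity of the renormalized solution enters precisely in controlling the error terms with the available integrability of $\nabla u$.

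For part (ii), set $B:=B_r$ and $v:=u+M\ge 0$ a.e. on $B$. For $j\in\N$ the truncation $v_j:=\min(v,j)=T_j(v)$ lies in $W^{1,q}_{\mathrm{loc}}(B)\cap L^\infty$, and using the renormalized formulation together with $\mu\ge 0$ one checks that $v_j$ is a local weak supersolution of the equation in $B$. By the regularity theory for supersolutions, each $v_j$ admits a lower semicontinuous representative $\hat v_j=\essliminf_{y\to\cdot}v_j$ which is $q$-superharmonic in $B$. Since $v_j$ is nondecreasing in $j$ and $v_j\to v$ a.e., the $\hat v_j$ are nondecreasing, so $\hat v:=\sup_j\hat v_j$ is lower semicontinuous, and $\hat v=v$ a.e. because on $\{v<j\}$ the sequence $\hat v_j$ stabilizes. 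It remains to verify that $\hat v$ is $q$-superharmonic: lower semicontinuity is immediate; $\hat v\not\equiv+\infty$ on any component follows from $v\in L^1_{\mathrm{loc}}(B)$ together with the pointwise Wolff potential bound (which shows $\hat v$ is finite wherever the Wolff potential of $\mu$ is finite, a dense set of full measure); and the comparison principle passes to the increasing limit. Thus $\tilde u:=\hat v-M$ is $q$-superharmonic on $B$ with $\tilde u=u$ a.e. Finally, since lower semicontinuity and comparison are local and two $q$-superharmonic functions that agree a.e. agree everywhere, the local representatives on a covering of $\Omega$ by such balls glue to a single $q$-superharmonic $\tilde u$ on $\Omega$ with $\tilde u=u$ a.e.

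The main obstacle is part (i), the essential lower bound. Compared with the $p$-growth case, the Caccioppoli estimate involves two competing scalings, and the restricted class of admissible test functions in the renormalized formulation forces one to track carefully which term, $|\nabla u|^p$ or $|\nabla u|^q$, controls the error on the sublevel sets $\{u<-\lambda\}$ as opposed to where $u$ is large; closing the De Giorgi iteration cleanly in this setting, with only the integrability of $\nabla u$ guaranteed by the renormalized theory at one's disposal, is the technical heart of the proof. The remaining steps are, by contrast, fairly standard once the lower bound and the stated Wolff potential estimate are available.
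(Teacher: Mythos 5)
Your overall architecture coincides with the paper's: first prove a local essential lower bound for $u$, then observe that the truncations $\min\{u,k\}$ are ordinary $W^{1,q}_{\rm loc}$ supersolutions, take their lower semicontinuous $q$-superharmonic representatives, and pass to the increasing limit. Part (ii) of your argument is correct and essentially identical to the paper's proof (the paper does not even need your patching step, since the truncations are supersolutions globally in $\Omega$). The problem is part (i), which is where all of the paper's work lies (Lemmas \ref{lem:re}--\ref{lem:bound2}), and which you yourself flag as ``the technical heart'' without actually closing it. Two concrete issues make your sketch insufficient as written. First, with the test function $\eta^q\bigl((-u-\lambda)_+\wedge T\bigr)$ the cross term $\int A(\nabla u)\cdot\nabla(\eta^q)\,h(u)\,dx$ is supported on all of $\{u<-\lambda\}$, while the good term $\int(|\nabla u|^p+|\nabla u|^q)\,|h'(u)|\,\eta^q\,dx$ only lives on the slab $\{-\lambda-T<u<-\lambda\}$; on the deep sublevel set $\{u<-\lambda-T\}$ the factor $h(u)\equiv T$ is a nonzero constant and $|\nabla u|^{p-1}+|\nabla u|^{q-1}$ there is not absorbed by anything. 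Young's inequality does not remove this term, and it does not fit the iteration scheme. The paper avoids it by using the bounded, saturating test function $\Phi(v)=1-\bigl(\bar g(\lambda)/\bar g(\lambda+v)\bigr)^{(\delta-1)/(q-1)}$, whose structure turns the bad contribution into $\bar g(\lambda)\bigl(\bar g(\lambda+v)/\bar g(\lambda)\bigr)^{\delta}$, which is integrable precisely under the summability built into Definition \ref{Def:renormalized}.

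Second, the iteration you propose does not launch from the integrability that a local renormalized solution actually has. The definition only gives $|u|^{q-1}\in L^m_{\rm loc}$ for $m<\tfrac{n}{n-q}$, i.e. $u_-\in L^s_{\rm loc}$ only for $s<\tfrac{n(q-1)}{n-q}$, and this exponent is below $q$ (indeed below $p$) unless $q^2\geq n$. Hence the standard De Giorgi quantities $\int(u+\lambda)_-^q\,dx$ (or $\int(u+\lambda)_-^p\,dx$) need not be finite, and truncating by $T$ only postpones the problem to the limit $T\to\infty$, where a $T$-uniform bound is exactly what is missing. This is why the paper runs a Kilpel\"ainen--Mal\'y type scheme instead: the iterated quantity is $\fint\bar g(v)^{\delta}\,dx$ with $\delta$ slightly larger than $1$ (finite by the assumed summability since $\delta<\tfrac{n}{n-p+1}\leq\tfrac{n}{n-q}$), the gain comes from the Sobolev inequality applied to $\omega=(\bar g(\lambda+v)/\bar g(\lambda))^{\delta/\tilde\delta}-1$ with exponent $\gamma\in(\tilde\delta,p^*)$, and the levels $l_j$ are chosen adaptively in terms of $\bar g^{-1}$ of the previous average, with the decay driven by the measure density $\theta_R$. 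You would need to supply an argument of comparable strength; as it stands, the lower bound --- and with it the theorem --- is not proved by your proposal.
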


\begin{theorem}\label{th-re}
	Let $u$ be a $q$-superharmonic function with the Riesz measure satisfying
	\begin{equation*}
		-\DIV(|\nabla u|^{p-2}\nabla u+|\nabla u|^{q-2}\nabla u)=\mu.
	\end{equation*}
	Then $u$ is a local renormalized solution, i.e.,
	\begin{equation*}
		\int_{\Omega}(|\nabla u|^{p-2}\nabla u+|\nabla u|^{q-2}\nabla u)\cdot\nabla (h(u)\phi)\,dx =\int_{\Omega}h(u)\phi\,d\mu
	\end{equation*}
	for all $\phi\in C^{\infty}_{0}(\Omega)$ and for all Lipschitz functions $h:\mathbb{R}\to \mathbb{R}$ whose derivatives  $h'$ are compactly supported.
\end{theorem}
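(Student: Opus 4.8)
The strategy is to establish the identity by truncation: approximate $u$ from above by the bounded $q$-superharmonic functions $u_k:=\min(u,k)$, prove a renormalized identity for each $u_k$, and let $k\to\infty$. Write $\cA(\xi):=|\xi|^{p-2}\xi+|\xi|^{q-2}\xi$. I would use the following properties of $q$-superharmonic functions and their Riesz measures, which in the $(p,q)$-setting are furnished by the potential-theoretic preliminaries: $u$ is quasicontinuous, locally bounded below, and finite outside a set of zero $q$-capacity, and $\{u=\infty\}$ has zero $q$-capacity; every $u_k$ lies in $W^{1,q}_{\mathrm{loc}}(\Omega)\cap L^\infty_{\mathrm{loc}}(\Omega)$ and is a supersolution, $\nabla u=\lim_k\nabla u_k$ a.e., $\cA(\nabla u)\in L^1_{\mathrm{loc}}(\Omega)$, and $\mu=-\DIV\cA(\nabla u)$ in $\mathcal{D}'(\Omega)$; the nonnegative Radon measures $\mu_k:=-\DIV\cA(\nabla u_k)$ charge no set of zero $q$-capacity, one has $\cA(\nabla u_k)=\cA(\nabla u)\mathbf 1_{\{u<k\}}$ a.e.\ (so $\cA(\nabla u_k)\to\cA(\nabla u)$ in $L^1_{\mathrm{loc}}(\Omega)$ and hence $\mu_k\rightharpoonup\mu$ weakly-$\ast$), and the restriction identity $\mu_k\llcorner\{u<k\}=\mu\llcorner\{u<k\}$ holds.

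First I would prove that each bounded truncation satisfies the renormalized identity: for all $\phi\in C^\infty_0(\Omega)$ and all Lipschitz $h$ with $h'$ compactly supported,
\[
\int_\Omega\cA(\nabla u_k)\cdot\nabla(h(u_k)\phi)\,dx=\int_\Omega h(u_k)\phi\,d\mu_k.
\]
Since $u_k\in W^{1,q}_{\mathrm{loc}}(\Omega)$ and $p\le q$, we have $\cA(\nabla u_k)\in L^{q'}_{\mathrm{loc}}(\Omega)$, while $h(u_k)\phi$ is a bounded, compactly supported, quasicontinuous function in $W^{1,q}(\Omega)$. Taking $\psi_j\in C^\infty_0(\Omega)$ with $\psi_j\to h(u_k)\phi$ in $W^{1,q}$, quasi-everywhere, and with $\sup_j\|\psi_j\|_\infty<\infty$ (a standard density argument for quasicontinuous Sobolev functions), the identity follows by passing to the limit in $\int_\Omega\cA(\nabla u_k)\cdot\nabla\psi_j\,dx=\int_\Omega\psi_j\,d\mu_k$: the left-hand sides converge by Hölder's inequality on a fixed compact set containing the supports, and the right-hand sides by dominated convergence with respect to $\mu_k$, which ignores the capacity-null exceptional set.

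Next, fix $h$ with $\supp h'\subset[-M,M]$, so that $h\equiv c_+:=h(M)$ on $[M,\infty)$ and $h$ is constant on $(-\infty,-M]$. A direct pointwise check gives $h(u_k)=h(u)$ for every $k\ge M$; moreover $h(u)=h(u_k)\in W^{1,q}_{\mathrm{loc}}(\Omega)$, so $h(u)\phi\in W^{1,q}(\Omega)$ has compact support with $\nabla(h(u)\phi)=h'(u)\nabla u\,\phi+h(u)\nabla\phi$, and $\cA(\nabla u_k)\cdot h'(u)\nabla u=\cA(\nabla u)\cdot h'(u)\nabla u$ a.e.\ for every $k\ge M$ (indeed $u_k=u$, hence $\nabla u_k=\nabla u$, where $|u|<M$, while $h'(u)\nabla u=0$ a.e.\ where $|u|\ge M$). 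Now let $k\to\infty$ in the identity of the previous step. On the left, split $\int_\Omega\cA(\nabla u_k)\cdot\nabla(h(u)\phi)\,dx$ into the term with $h'(u)\nabla u\,\phi$, which by the preceding equals $\int_\Omega\cA(\nabla u)\cdot h'(u)\nabla u\,\phi\,dx$ for every $k\ge M$, and the term with $h(u)\nabla\phi\in L^\infty$, which converges because $\cA(\nabla u_k)\to\cA(\nabla u)$ in $L^1_{\mathrm{loc}}$; hence the left side tends to $\int_\Omega\cA(\nabla u)\cdot\nabla(h(u)\phi)\,dx$. On the right, write $h=c_+-g$ with $g$ Lipschitz, $g'$ supported in $[-M,M]$, and $g\equiv0$ on $[M,\infty)$, so that $\int_\Omega h(u)\phi\,d\mu_k=c_+\int_\Omega\phi\,d\mu_k-\int_\Omega g(u)\phi\,d\mu_k$; the first term converges to $c_+\int_\Omega\phi\,d\mu$ by weak-$\ast$ convergence (as $\phi\in C^\infty_0(\Omega)$), while for $k\ge M$ the second equals $\int_{\{u<k\}}g(u)\phi\,d\mu_k=\int_{\{u<k\}}g(u)\phi\,d\mu$ (because $g(u)=0$ on $\{u\ge k\}$ and $\mu_k\llcorner\{u<k\}=\mu\llcorner\{u<k\}$), which tends to $\int_\Omega g(u)\phi\,d\mu$ by dominated convergence, using $\mu(\{u=\infty\})=0$ and $\mu(\supp\phi)<\infty$. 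Equating the two limits yields $\int_\Omega\cA(\nabla u)\cdot\nabla(h(u)\phi)\,dx=\int_\Omega h(u)\phi\,d\mu$, which is the asserted renormalized identity.

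I expect the main difficulty to lie not in this limiting scheme but in securing the $(p,q)$-growth inputs it relies on: that the truncations of a $q$-superharmonic function belong to $W^{1,q}_{\mathrm{loc}}$, that $\cA(\nabla u)\in L^1_{\mathrm{loc}}$ with $\mu=-\DIV\cA(\nabla u)$, that $\mu$ charges no set of zero $q$-capacity, and, above all, the restriction identity $\mu_k\llcorner\{u<k\}=\mu\llcorner\{u<k\}$. In the $p$-growth case these rest on Caccioppoli-type estimates and Kilpel\"{a}inen--Mal\'{y}-type pointwise potential bounds; for the present operator with two power terms they must be re-derived, and it is here that the local lower bound for the solution and the Wolff potential estimates invoked elsewhere in the paper come into play.
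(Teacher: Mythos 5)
Your limiting scheme (prove the identity for each bounded truncation $u_k$ against quasicontinuous test functions, then let $k\to\infty$ using $\mathcal{A}(\nabla u_k)=\mathcal{A}(\nabla u)\mathbf 1_{\{u<k\}}\to\mathcal{A}(\nabla u)$ in $L^1_{\rm loc}$ and $\mu_k\rightharpoonup\mu$) is internally coherent, but it rests on the restriction identity $\mu_k\llcorner\{u<k\}=\mu\llcorner\{u<k\}$, which you list as an input "furnished by the potential-theoretic preliminaries." It is not: nothing in the paper's preliminaries (nor in the cited $(p,q)$- or Orlicz-growth literature) provides it, and it is not an elementary fact about superharmonic functions. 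The set $\{u<k\}$ is only quasi-open (lower semicontinuity of $u$ makes $\{u>k\}$ open, not $\{u<k\}$), so the Riesz measures of $u$ and $u_k$ cannot be compared there by a local distributional argument. More importantly, in the Dal Maso--Murat--Orsina--Prignet framework the statement "$\mu[T_k(u)]$ agrees with $\mu_0$ on $\{|u|<k\}$ up to measures concentrated on the level sets $\{|u|=k\}$" is one of the \emph{equivalent definitions} of a renormalized solution. Granting the restriction identity therefore amounts to granting (a reformulation of) the conclusion; your remaining soft argument is essentially the easy implication between two equivalent formulations. So the proposal reduces the theorem to an unproven lemma of the same depth, which is a genuine gap.

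This is precisely the difficulty the paper's Section 4 is built to circumvent: instead of comparing $\mu_k$ and $\mu$ on $\{u<k\}$, it localizes the singular part on a compact $K_\epsilon\subset\{W_{\mu,1}=\infty\}$ with a cutoff $\theta_\epsilon$ (where $h(u)$ is constant and the equation is trivial), constructs an auxiliary superharmonic solution $\omega_\epsilon$ for the restricted measure $\mu\big|_{\Omega'\setminus K_\epsilon}$, uses the Wolff-potential class $\mathcal{S}_{\mu,r,L}$ to make $u$ and $\omega_\epsilon$ comparable, and controls the error through the energy estimate of Lemma \ref{lem:sec4} for the level-set cutoffs $\psi_\lambda$, which decays like $\lambda^{1-p}\mu_0(\{W_{\mu,r}>\lambda/L\})+\epsilon$. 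Any completion of your route would have to supply a proof of the restriction identity of comparable substance (for the $p$-Laplacian this is Dal Maso--Malusa/DMOP-level machinery, not a preliminary). Two smaller points: the claim $\mu(\{u=\infty\})=0$ is false in general --- by Lemma \ref{lem-sec2} the singular part $\mu_s$ is concentrated exactly on $\{u=\infty\}$ --- although it is not actually needed where you invoke it, since $g(u)=0$ on $\{u\geq M\}$; and your first step (the identity for $u_k$ via density of smooth functions among bounded quasicontinuous $W^{1,q}$ functions, using $\mu_k\in W^{-1,q'}_{\rm loc}$) is fine.
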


The paper is organized as follows. Section \ref{sec2} begins with a discussion of some fundamental properties of function spaces and the notion of solutions. In Section  \ref{sec3}, we demonstrate that every local renormalized solution possesses a $q$-superharmonic representative. Finally, Section  \ref{sec4}  establishes that $q$-superharmonic functions are indeed locally renormalized solutions.


\section{Preliminary lemmas} 

\label{sec2}
In this section,  we give the definition of $q$-superharmonic functions and some auxiliary results that will be used later.

\subsection{ $q$-superharmonic functions}

A continuous function
$h\in W_{\rm loc}^{1,q}(\Omega)$ is said to be $q$-harmonic in $\Omega$ if it is a weak solution to
\begin{equation*}
    -\DIV\left( |\nabla h|^{p-2}\nabla h+|\nabla h|^{q-2}\nabla h
    \right)=0,
\end{equation*}
that is,
\begin{equation*}
    \int_{\Omega}\left(|\nabla h|^{p-2}\nabla h+|\nabla h|^{q-2}\nabla h\right) \cdot \nabla \varphi\,dx =0
\end{equation*}
for all $\varphi\in C^{\infty}_0(\Omega)$.

A lower semicontinuous function $u:\Omega\to \mathbb{R}^n\cup\{\infty\}$ is called $q$-superharmonic if $u\not\equiv\infty$ in each component of $\Omega$, and for each open $U\subseteq \Omega$ and each $h\in C(\overline{U})$ that is $q$-harmonic in $U$, the inequality $u\geq h$ on $\partial U$ implies $u\geq h$ in $U$.

A function $u\in W^{1,q}_{\rm loc}(\Omega)$ is called a $q$-supersolution to
\begin{equation}\label{eq-sec2-new}
    \DIV(|\nabla u|^{p-2}\nabla u+|\nabla u|^{q-2}\nabla u)=0
\end{equation}
if
\[ \int_{\Omega}(|\nabla u|^{p-2}\nabla u +|\nabla u|^{q-2}\nabla u)\cdot\nabla \phi\,dx\geq 0\]
for  $0\leq\phi\in C^{\infty}_0(\Omega)$.

 We have the following connections between $q$-superharmonic functions and $q$-supersolutions.


\begin{lemma}[Lemma 4.6,~\cite{Che22Gen}]
    If $u$ is a $q$-superharmonic function in $\Omega$ and locally bounded from above, then $u\in W_{\rm loc}^{1,q}(\Omega)$ and $u$ is $q$-supersolution in $\Omega$.
\end{lemma}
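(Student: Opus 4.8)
The plan is to realize $u$ locally as an increasing limit of bounded $q$-supersolutions and then pass to the limit. Fix a ball $B\Subset\Omega$. Since $u$ is lower semicontinuous it is bounded below on $\overline B$, and by hypothesis bounded above there, so $\osc_{B}u<\infty$; by lower semicontinuity I can choose Lipschitz functions $\psi_j$ on $\overline B$ with $\psi_j\uparrow u$ pointwise and $\psi_1\le\psi_j\le\sup_{B}u$. I would then let $v_j$ be the solution of the obstacle problem in $B$ for $\mathcal{A}(\xi):=|\xi|^{p-2}\xi+|\xi|^{q-2}\xi$ with obstacle and boundary datum $\psi_j$, i.e.\ the minimizer of $w\mapsto\int_{B}\big(\tfrac1p|\nabla w|^{p}+\tfrac1q|\nabla w|^{q}\big)\diff x$ over $\{w\in W^{1,q}(B):\ w\ge\psi_j\text{ a.e.},\ w-\psi_j\in W^{1,q}_{0}(B)\}$. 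The theory of this obstacle problem (convex, coercive, regular energy, exactly parallel to the $p$-Laplacian case) yields a unique $v_j$ which is a $q$-supersolution in $B$, is $q$-harmonic in $U_j:=\{v_j>\psi_j\}$, is continuous on $\overline B$ with $v_j=\psi_j$ on $\partial B$, and is monotone in the data, so $v_j\le v_{j+1}$. Applying the definition of $q$-superharmonicity with $U=U_j$ and $h=v_j$ — which is $q$-harmonic in $U_j$, lies in $C(\overline{U_j})$, and satisfies $v_j=\psi_j\le u$ on $\partial U_j$ — gives $v_j\le u$ in $U_j$, hence in all of $B$; combined with $v_j\ge\psi_j\uparrow u$ this forces $v_j\uparrow u$ pointwise in $B$, with $\psi_1\le v_j\le\sup_B u$.

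Second, I would prove a uniform Caccioppoli estimate. Fixing $B'\Subset B$ and a cutoff $\eta\in C^{\infty}_{0}(B)$ with $\eta\equiv1$ on $B'$, test the supersolution inequality for $v_j$ with $\phi=\eta^{q}(M-v_j)\ge0$, $M:=\sup_B u$; using $\mathcal{A}(\xi)\cdot\xi=|\xi|^{p}+|\xi|^{q}\ge|\xi|^{q}$, $|\mathcal{A}(\xi)|\le|\xi|^{p-1}+|\xi|^{q-1}$, $p\le q$, and Young's inequality to absorb the top-order term, one obtains $\int_{B'}|\nabla v_j|^{q}\diff x\le C$ with $C$ depending only on $n,p,q,B,B'$ and $\osc_B u$, and crucially independent of $j$. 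Hence $(v_j)$ is bounded in $W^{1,q}(B')$, and since $v_j\to u$ pointwise a subsequence converges weakly to $u$ in $W^{1,q}(B')$; as $B'\Subset B\Subset\Omega$ were arbitrary, $u\in W^{1,q}_{\rm loc}(\Omega)$.

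Third, to show $u$ is a $q$-supersolution I would upgrade the convergence of the gradients. Testing the supersolution inequality for $v_j$ with $\phi=\eta^{q}(u-v_j)\ge0$ (admissible now that $u\in W^{1,q}_{\rm loc}$), the lower-order term is $O\big(\|u-v_j\|_{L^{q}(B')}\,\|\mathcal{A}(\nabla v_j)\|_{L^{q'}(B')}\big)\to0$, since $u-v_j\to0$ in $L^{q}(B')$ (monotone and uniformly bounded) while $\|\mathcal{A}(\nabla v_j)\|_{L^{q'}(B')}$ stays bounded by the Caccioppoli estimate (using $p\le q$). This gives $\limsup_{j}\int\eta^{q}\mathcal{A}(\nabla v_j)\cdot(\nabla v_j-\nabla u)\diff x\le0$; subtracting $\int\eta^{q}\mathcal{A}(\nabla u)\cdot(\nabla v_j-\nabla u)\diff x\to0$ (weak convergence) and invoking the monotonicity of $\mathcal{A}$, one gets $\int\eta^{q}\big(\mathcal{A}(\nabla v_j)-\mathcal{A}(\nabla u)\big)\cdot(\nabla v_j-\nabla u)\diff x\to0$, and the standard quantitative monotonicity inequalities for $\mathcal{A}$ then force $\nabla v_j\to\nabla u$ in $L^{q}_{\rm loc}(B)$. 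Consequently $\mathcal{A}(\nabla v_j)\to\mathcal{A}(\nabla u)$ in $L^{q'}_{\rm loc}(B)$, so letting $j\to\infty$ in $\int_{B}\mathcal{A}(\nabla v_j)\cdot\nabla\phi\,\diff x\ge0$ for $0\le\phi\in C^{\infty}_{0}(B)$ yields $\int_{B}\mathcal{A}(\nabla u)\cdot\nabla\phi\,\diff x\ge0$; since $B$ is arbitrary, $u$ is a $q$-supersolution in $\Omega$.

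I expect the main obstacle to be the construction step, and within it the sandwich $\psi_j\le v_j\le u$: the upper bound is precisely the point where the potential-theoretic definition of $q$-superharmonicity is used, rather than mere $W^{1,q}_{\rm loc}$-regularity, and it needs enough regularity of the obstacle solutions (continuity, so that $U_j$ is open and $v_j\in C(\overline{U_j})$) together with the comparison principle between $q$-superharmonic and $q$-harmonic functions in the $(p,q)$-setting. Keeping the Caccioppoli constant bounded in $j$ likewise rests on the uniform two-sided bound $\psi_1\le v_j\le\sup_B u$ produced by this sandwich. By contrast, the energy estimate and the monotonicity/compactness argument behind the strong gradient convergence are routine.
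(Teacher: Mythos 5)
This lemma is not proved in the paper; it is quoted verbatim from \cite{Che22Gen}, whose argument is exactly the one you reconstruct: approximation from below by continuous solutions of obstacle problems, the comparison principle to obtain the sandwich $\psi_j\le v_j\le u$, a uniform Caccioppoli estimate for the resulting bounded supersolutions, and strong gradient convergence via monotonicity of the operator. Your proposal is correct (modulo the standard interior and boundary regularity theory for the obstacle problem with the Orlicz-type integrand $\tfrac1p t^p+\tfrac1q t^q$, which you rightly flag as the ingredient being imported) and takes essentially the same route as the cited source.
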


In addition, it has been proved in \cite[Theorem 3.1]{Fang22acv} that
 a distributional supersolution
 to \eqref{eq-sec2-new} is $q$-superharmonic. Thus, we can immediately deduce the following proposition.
\begin{proposition}
    Suppose that $u$ is a finite almost everywhere  function in $\Omega$. Then $u$ has a $q$-superharmonic representative if and only if the truncation $u_k=\min\{u,k\}$ are supersolutions to
    \begin{equation*}
        -\DIV\left(|\nabla u|^{p-2}\nabla u+|\nabla u|^{q-2}\nabla u\right)=0
    \end{equation*}
    for each $k>0$, i.e., $u_k\in W^{1,q}_{\rm loc}(\Omega)$ and
    \begin{equation*}
        \int_{\Omega}\left(|\nabla u_k|^{p-2}\nabla u_k+|\nabla u_k|^{q-2}\nabla u_k\right) \cdot \nabla \varphi \,dx\geq 0
    \end{equation*}
    for all nonnegative $\varphi \in C^{\infty}_0(\Omega)$.
\end{proposition}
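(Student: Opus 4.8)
The plan is to derive both implications from the two results quoted just above: Lemma~4.6 of \cite{Che22Gen} (a $q$-superharmonic function that is locally bounded above lies in $W^{1,q}_{\rm loc}(\Omega)$ and is a $q$-supersolution) and Theorem~3.1 of \cite{Fang22acv} (a distributional $q$-supersolution has a $q$-superharmonic representative, namely its lower semicontinuous regularization $\hat w(x)=\essliminf_{y\to x}w(y)$, which coincides with $w$ almost everywhere).

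For the direction ``$u$ has a representative $\Rightarrow$ truncations are supersolutions'', I would first record the elementary fact that the minimum of a $q$-superharmonic function $\tilde u$ with a constant $k$ is again $q$-superharmonic: lower semicontinuity is preserved under taking a minimum with a constant; $\min\{\tilde u,k\}\not\equiv\infty$ on any component since it is bounded above by $k$; and the comparison property passes through because the constant function $k$ is $q$-harmonic, so $\min\{\tilde u,k\}\ge h$ on $\partial U$ forces both $\tilde u\ge h$ and $k\ge h$ on $\partial U$, hence in $U$, hence $\min\{\tilde u,k\}\ge h$ in $U$. Being $q$-superharmonic and bounded above, $\min\{\tilde u,k\}$ lies in $W^{1,q}_{\rm loc}(\Omega)$ and is a $q$-supersolution by Lemma~4.6 of \cite{Che22Gen}; since $\min\{\tilde u,k\}=u_k$ almost everywhere, and neither membership in $W^{1,q}_{\rm loc}(\Omega)$ nor the weak test inequality is affected by modification on a null set, $u_k$ is a $q$-supersolution for every $k>0$.

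For the converse, assume every $u_k=\min\{u,k\}$ is a $q$-supersolution and let $v_k(x)=\essliminf_{y\to x}u_k(y)$ be its lower semicontinuous regularization, which by Theorem~3.1 of \cite{Fang22acv} is $q$-superharmonic and satisfies $v_k=u_k$ a.e. The plan is to take $v:=\sup_{k>0}v_k=\lim_{k\to\infty}v_k$ and show it is the desired representative. Monotonicity $v_k\le v_{k+1}$ everywhere follows from $u_k\le u_{k+1}$ a.e.\ (the essential infimum over any ball is monotone under a.e.\ inequalities), so the limit exists; it is lower semicontinuous as a supremum of lower semicontinuous functions; and since $u$ is finite a.e.\ we get $v_k(x)=u_k(x)=u(x)$ for a.e.\ $x$ once $k>u(x)$, hence $v=u$ a.e., so $v$ is finite a.e.\ and $v\not\equiv\infty$ on each component. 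It then remains to verify the comparison property of $v$: given $U\subseteq\Omega$ open and $h\in C(\overline U)$ that is $q$-harmonic in $U$ with $v\ge h$ on $\partial U$, fix $\eps>0$; for each $x\in\partial U$ there is an index $k$ with $v_k(x)>h(x)-\eps$, which by lower semicontinuity of $v_k$ and continuity of $h$ persists on a neighbourhood of $x$, and compactness of $\partial U$ together with monotonicity in $k$ yields a single $k_0$ with $v_{k_0}\ge h-\eps$ on $\partial U$. Since the operator depends only on the gradient, $h-\eps$ is $q$-harmonic in $U$ and continuous on $\overline U$, so the comparison property of the $q$-superharmonic function $v_{k_0}$ gives $v_{k_0}\ge h-\eps$ in $U$, whence $v\ge v_{k_0}\ge h-\eps$ in $U$; letting $\eps\to0$ gives $v\ge h$ in $U$. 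Thus $v$ is a $q$-superharmonic representative of $u$.

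I expect the converse direction---specifically the passage to the increasing limit---to be the part that needs the most care: one must be sure the lower semicontinuous regularizations are monotone in $k$ and agree with $u_k$ on a full-measure set (so that $v$ is genuinely a finite-a.e.\ function and not merely an extended-real-valued limit), and that the finite-subcover argument on $\partial U$ really reduces the comparison inequality for $v$ to that for a single $v_{k_0}$. Everything else is bookkeeping once the two quoted facts are in hand; if the ``increasing limit of $q$-superharmonic functions is $q$-superharmonic'' statement is already available in the $(p,q)$-growth literature, the last paragraph of the argument can be replaced by a citation.
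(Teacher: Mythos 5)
Your argument is correct and follows exactly the route the paper intends: the paper offers no written proof, asserting that the proposition is ``immediately'' deduced from Lemma~4.6 of \cite{Che22Gen} and Theorem~3.1 of \cite{Fang22acv}, and your writeup supplies precisely the standard details behind that deduction (the minimum of a $q$-superharmonic function with a constant is $q$-superharmonic and bounded above, hence a supersolution; conversely, the lower semicontinuous regularizations $v_k$ are $q$-superharmonic, increase in $k$, and their limit inherits the comparison property via the finite-subcover argument on the compact set $\partial U$). No gaps.
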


An important property of the $q$-superharmonic function is the local summability. See \cite[Theorem 1.1]{Eddaoudi23JEPE} for the proof of the following proposition.

\begin{proposition}\label{prop-sec2-inte}
    If $u$ is $q$-superharmonic in $\Omega$, then $|u|^{q-1}\in L^m_{\rm loc}(\Omega)$ and $|Du|^{q-1}\in L^s_{\rm loc}(\Omega)$ whenever
    \begin{equation*}
        0<m<\frac{n}{n-q}\quad\text{and}\quad0<s<\frac{n}{n-1}.
    \end{equation*}
\end{proposition}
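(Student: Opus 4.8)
The plan is to adapt the classical scheme for $p$-superharmonic functions (see \cite{Kil92Degen,Kil11super}, and \cite{CGZ-24} for the Orlicz case) to the $(p,q)$-setting, the only genuinely new point being the bookkeeping between the $p$- and $q$-terms. Since $q$-superharmonicity is unaffected by adding a constant and a lower semicontinuous function attains a finite minimum on compact subsets of $\Omega$, I would first localize and assume $u\ge1$. The truncations $u_{k}:=\min\{u,k\}$ are then bounded nonnegative $(p,q)$-supersolutions in $W^{1,q}_{\rm loc}(\Omega)$ by the proposition above, and one sets $Du:=\lim_{k\to\infty}Du_{k}$, which exists a.e.\ because $Du_{k}=Du\,\chi_{\{u<k\}}$. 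Every estimate below is obtained with constants independent of $k$, so that the statements for $u$ follow by monotone convergence (for $|u|^{q-1}$) and Fatou's lemma (for $|Du|^{q-1}$). One further input is needed: a base estimate $u\in L^{s_{0}}_{\rm loc}(\Omega)$ for some $s_{0}>0$, which I would obtain from a weak Harnack inequality for bounded $(p,q)$-supersolutions applied to $u_{k}$, letting $k\to\infty$ and using that $\inf_{\overline B}u$ is finite whenever $\overline B\subset\Omega$ (as $u<\infty$ a.e.).

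The main tool is a Caccioppoli inequality for positive $(p,q)$-supersolutions. Testing the supersolution inequality for $u_{k}$ with $\phi=\eta^{q}u_{k}^{-\sigma}$, $\eta\in C_{0}^{\infty}(\Omega)$, $\sigma>0$, and moving the coercive term to the left yields
\[
\sigma\int_{\Omega}\eta^{q}u_{k}^{-1-\sigma}\bigl(|Du_{k}|^{p}+|Du_{k}|^{q}\bigr)\diff x\le q\int_{\Omega}\eta^{q-1}|\nabla\eta|\,u_{k}^{-\sigma}\bigl(|Du_{k}|^{p-1}+|Du_{k}|^{q-1}\bigr)\diff x .
\]
Applying Young's inequality to the $p$- and $q$-parts of the right-hand side separately and absorbing, one arrives at
\[
\int_{\Omega}\eta^{q}u_{k}^{-1-\sigma}\bigl(|Du_{k}|^{p}+|Du_{k}|^{q}\bigr)\diff x\le C(\sigma,n,q)\int_{\supp\eta}\bigl(|\nabla\eta|^{q}+\eta^{q-p}|\nabla\eta|^{p}\bigr)\bigl(1+u_{k}^{\,q-1-\sigma}\bigr)\diff x ,
\]
where the hypothesis $p\le q$ enters exactly through the nonnegativity of the exponent $q-p$ of $\eta$ in the $p$-term, which is what makes that term absorbable; the right-hand side is finite for every $\sigma>0$ by the base estimate (and automatically when $\sigma\ge q-1$, since then $u_{k}^{\,q-1-\sigma}\le1$).

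For the integrability of $u$ itself, fix $0<\sigma<q-1$ and set $v:=u_{k}^{(q-1-\sigma)/q}$; then $|Dv|^{q}$ is a constant multiple of $u_{k}^{-1-\sigma}|Du_{k}|^{q}$ and $v^{q}=u_{k}^{\,q-1-\sigma}$, so the Caccioppoli estimate gives $\|\nabla(\eta v)\|_{L^{q}}^{q}\le C(1+\|v\|_{L^{q}(\supp\eta)}^{q})$, and Sobolev's inequality forces $v\in L^{q^{*}}_{\rm loc}$ with $q^{*}=nq/(n-q)$ (any finite exponent when $q=n$). In other words $u\in L^{s}_{\rm loc}\Rightarrow u\in L^{\,sn/(n-q)}_{\rm loc}$ for all $0<s<q-1$, with $k$-uniform bounds; iterating from $s_{0}$ — the exponent multiplies by the fixed factor $n/(n-q)>1$ at each step, so finitely many steps push it past $q-1$, after which one more application of the implication reaches every exponent below $(q-1)n/(n-q)$ — yields $|u|^{q-1}\in L^{m}_{\rm loc}$ for all $0<m<n/(n-q)$. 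The gradient bound then comes by interpolating the Caccioppoli estimate against this integrability: for $0<\tau<q$, H\"older's inequality with exponents $q/\tau$ and $q/(q-\tau)$ gives
\[
\int_{\supp\eta}|Du_{k}|^{\tau}\diff x\le\Bigl(\int_{\supp\eta}u_{k}^{-1-\sigma}|Du_{k}|^{q}\diff x\Bigr)^{\tau/q}\Bigl(\int_{\supp\eta}u_{k}^{\,\tau(1+\sigma)/(q-\tau)}\diff x\Bigr)^{(q-\tau)/q},
\]
and a direct computation shows $\tau(1+\sigma)/(q-\tau)<(q-1)n/(n-q)$ precisely when $\tau<(q-1)n/(n-1)$, provided $\sigma>0$ is small; hence $|Du_{k}|\in L^{\tau}_{\rm loc}$ uniformly in $k$ for all such $\tau$, and Fatou's lemma delivers $|Du|^{q-1}\in L^{s}_{\rm loc}$ for all $0<s<n/(n-1)$.

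The step I expect to be the real obstacle is the base estimate $u\in L^{s_{0}}_{\rm loc}$: it is not formal and rests on a weak Harnack (or local boundedness) inequality for bounded $(p,q)$-supersolutions, itself a Moser-iteration argument. A more technical but routine difficulty is running the Caccioppoli absorption and all subsequent estimates uniformly in the truncation level $k$ while tracking the mixed $p$- and $q$-contributions — and it is there that $p\le q$ is used in an essential way.
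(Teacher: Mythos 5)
The paper does not prove this proposition at all --- it simply cites \cite[Theorem~1.1]{Eddaoudi23JEPE}, and the argument there (for $G(\cdot)$-superharmonic functions in the Musielak--Orlicz setting) is exactly the scheme you describe: truncate, run a Caccioppoli inequality with test function $\eta^{q}u_{k}^{-\sigma}$, bootstrap via Sobolev to get the sharp exponent $(q-1)n/(n-q)$ for $u$, and interpolate to get $(q-1)n/(n-1)$ for the gradient. Your exponent arithmetic checks out (the $q$-part drives both thresholds, consistent with the behaviour of the fundamental solution, and $p\le q$ is used precisely where you say it is, to make the $p$-remainder $\eta^{q-p}|\nabla\eta|^{p}u_{k}^{p-1-\sigma}$ harmless once $u_{k}\ge 1$), so the proposal is a correct reconstruction of essentially the same proof. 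The one genuine external input is the weak Harnack inequality for nonnegative $(p,q)$-supersolutions giving the base exponent $s_{0}$ (and, implicitly, the fact that $u\not\equiv\infty$ forces $\operatorname{ess\,inf}_{B}u<\infty$ on every ball); you flag this honestly, and it is available for this operator since $g(t)=t^{p-1}+t^{q-1}$ is a doubling Orlicz-type growth covered by Lieberman-type Harnack estimates and by \cite{Che22Gen}.
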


A function $u$ is a solution to
\begin{equation}\label{eq:Sec2}
-\DIV\left(|\nabla u|^{p-2}\nabla u+|\nabla u|^{q-2} \nabla u\right)=\mu,
\end{equation}
if
\begin{equation*}
        \int_{\Omega}\left(|\nabla u|^{p-2}\nabla u+|\nabla u|^{q-2}\nabla u\right)\cdot \nabla \varphi\,dx=\int_{\Omega}\varphi\,d\mu,
\end{equation*}
for all $\varphi\in C^{\infty}_0(\Omega)$.

Indeed, for any nonnegative, bounded Radon measure $\mu$ on $\Omega$, there is a $q$-superharmonic function solving~\eqref{eq:Sec2}, see~\cite[Theorem 1]{CGZ-24}. Conversely, for a finite almost everywhere  $q$-superharmonic function there exists a nonnegative Radon measure $\mu$ such that $u$ solves \eqref{eq:Sec2}. This measure $\mu$ is called the Riesz measure of $u$, and it is often denoted by $\mu[u]$, see~\cite[Proposition 3.14]{CGZ-24}.

\begin{lemma}[Theorem 2, \cite{CGZ-24}]
    If $u$ is a nonnegative, $q$-superharmonic function in $B(x,r)\subset B(x, 2r)\subset \Omega$ for $r\in (0,1)$, which is finite almost everywhere, and if $\mu$ is generated by $u$, then we have
    \begin{equation}\label{eq-sec2-wolff}
        C\left(W_{\mu,r}(x)-r\right)\leq u(x)\leq C\left(\inf_{B(x,r)}u(x)+W_{\mu,r}(x)+r\right),
    \end{equation}
    where
    \begin{equation*}
       W_{\mu,r}(x)=\int^{r}_0g^{-1}\left(\frac{\mu(B(x,s))}{s^{n-1}}\right) \,ds
    \end{equation*}
    and the function $g$ is defined as \begin{equation*}
    g(t)=t^{p-1}+t^{q-1}, \quad \forall t\in\mathbb{R}^+.
\end{equation*}
\end{lemma}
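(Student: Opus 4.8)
The plan is to treat \eqref{eq-sec2-wolff} as a Wolff-potential estimate for the Orlicz-type operator $u\mapsto-\DIV\cA(\nabla u)$, where $\cA(\xi)=(|\xi|^{p-2}+|\xi|^{q-2})\xi$ and $g=G'$ with $G(t)=\int_0^t g(s)\,ds=\frac{t^p}{p}+\frac{t^q}{q}$, and to run the Kilpel\"ainen--Mal\'y iteration (cf. \cite{Kil92Degen}) adapted to the $g$-growth. First I would normalise $x=0$ and, by a routine truncation, reduce to the case in which $u$ is a nonnegative $q$-supersolution (the general case following by applying everything to $u_k=\min\{u,k\}$ and letting $k\to\infty$). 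Then I would discretise: with $r_j:=2^{-j}r$ and $B^j:=B(0,r_j)$, the monotonicity of $s\mapsto\mu(B(0,s))$ and the doubling of $g^{-1}$ (valid since $1<p\le q$) make $W_{\mu,r}(0)$ comparable to the dyadic sum $\mathcal{S}:=\sum_{j\ge0}r_j\,g^{-1}\!\big(\mu(B^j)/r_j^{n-1}\big)$, so it suffices to compare $u(0)$ (precise representative) with $\mathcal{S}$. The tools I would use as black boxes are: the comparison principle for $\cA$-super/subsolutions; the Harnack inequality and local H\"older continuity for nonnegative $\cA$-harmonic functions; Caccioppoli estimates in the Orlicz--Sobolev class attached to $g$; and the $L^\infty$ bound for measure data,
\begin{equation*}
\sup_{B^{j+1}}|u-h_j|\ \le\ C\,r_j\,g^{-1}\!\Big(\frac{\mu(B^j)}{r_j^{n-1}}\Big)+C\,r_j,
\end{equation*}
where $h_j$ is the $\cA$-harmonic replacement of $u$ in $B^j$ (so $h_j-u$ has zero trace on $\partial B^j$, and $h_j\le u$ because $u$ is a supersolution); this $L^\infty$ estimate is proved by testing the equation for $u-h_j$ with truncations of $u-h_j$ and iterating, and it is here that the Orlicz--Sobolev embedding, hence the hypothesis $q\le n$, enters. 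The $\pm r$ corrections in \eqref{eq-sec2-wolff} are forced by the non-homogeneity of $g(t)=t^{p-1}+t^{q-1}$.

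For the lower bound I would let $v\in W^{1,G}_0(B(0,2r))$ solve $-\DIV\cA(\nabla v)=\nu$ in $B(0,2r)$, where $\nu$ is the restriction of $\mu$ to $\overline{B(0,r)}$; then $v\ge0$, and since $\nu\le\mu$ while $u\ge0=v$ on $\partial B(0,2r)$ in the trace sense, the comparison principle gives $u\ge v$ in $B(0,2r)$, so $u(0)\ge v(0)$. The remaining point is the classical nonlinear-potential-theory fact that the $\cA$-potential of a measure dominates its Wolff potential at the centre, which I would obtain by decomposing $v$ over dyadic annuli and applying a weak Harnack inequality:
\begin{equation*}
v(0)\ \ge\ c\,\mathcal{S}-C\,r .
\end{equation*}
Together these give $u(0)\ge c\,\mathcal{S}-Cr\ge C\big(W_{\mu,r}(0)-r\big)$, the left-hand inequality.

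For the upper bound --- the core of the argument --- I would iterate over the scales $B^j$. With $h_j$ as above, $h_j\ge0$ is $\cA$-harmonic in $B^j$, so Harnack gives $\sup_{B^{j+1}}h_j\le C\inf_{B^{j+1}}h_j$, while the $L^\infty$ estimate controls $\sup_{B^{j+1}}|u-h_j|$ by $r_j g^{-1}(\mu(B^j)/r_j^{n-1})+r_j$. Tracking $\inf_{B^j}u$ and $h_j$ through the scales, comparison together with Harnack produces a recursion with a constant larger than $1$, which cannot be iterated naively; the way around this --- exactly as in the classical case --- is a stopping-time/De Giorgi iteration in which, at each scale, either $\inf_{B^j}u$ plus the potential accumulated so far already dominates the oscillation of $u$ (and the iteration terminates), or a definite portion of $\mu(B^j)$ is ``spent'', the thresholds and ratios being chosen so that the geometric loss from Harnack is beaten by a geometric gain. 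Carrying this out yields
\begin{equation*}
u(0)\ \le\ C\inf_{B(0,r)}u+C\,\mathcal{S}+C\,r\ \le\ C\big(\inf_{B(0,r)}u+W_{\mu,r}(0)+r\big),
\end{equation*}
which is the right-hand inequality of \eqref{eq-sec2-wolff}.

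I expect the main obstacle to be the upper bound. Beyond the $(p,q)$-bookkeeping --- working in the Orlicz--Sobolev space rather than in $W^{1,p}$, and carrying the lower-order $r$-errors coming from the non-homogeneity of $g$ --- the crux is that the recursion from comparison and Harnack carries a constant $>1$, so the real work is to organise the De Giorgi iteration at each scale so that the constants telescope and the Wolff series converges. The availability of the $g$-growth $L^\infty$--measure estimate, which is what uses the assumption $q\le n$, is precisely what makes this iteration run.
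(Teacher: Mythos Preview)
The paper does not prove this lemma: it is stated as ``Theorem 2, \cite{CGZ-24}'' and simply quoted as a black box, so there is no proof in the paper to compare against. Your sketch is, in outline, the Kilpel\"ainen--Mal\'y scheme adapted to Orlicz growth, which is exactly the machinery of \cite{CGZ-24}; in that sense your approach and the (external) source agree at the level of strategy.

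That said, one step in your outline is not quite right as written. For the upper bound you invoke an $L^\infty$ comparison estimate
\[
\sup_{B^{j+1}}|u-h_j|\ \le\ C\,r_j\,g^{-1}\!\Big(\frac{\mu(B^j)}{r_j^{n-1}}\Big)+C\,r_j
\]
for the $\cA$-harmonic replacement $h_j$ of $u$ in $B^j$. An estimate of this strength is \emph{not} available directly: $u$ is merely $q$-superharmonic, so $u-h_j$ need not be bounded on $B^{j+1}$ (only nonnegative), and in the Kilpel\"ainen--Mal\'y argument one never proves such a pointwise bound. What one actually controls at each scale is an integral quantity of the type $\fint_{B^j}G\big((u-\ell_j)_+/r_j\big)\,dx$ for carefully chosen levels $\ell_j$, via a Caccioppoli inequality tested against functions of the form $\eta^q\,\Phi(u)$; the De Giorgi--type iteration then produces the increment $\ell_{j+1}-\ell_j\lesssim r_j\,g^{-1}(\mu(B^j)/r_j^{n-1})+r_j$, and summing gives the upper Wolff bound. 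Your stopping-time remark is the right intuition, but the mechanism is an energy/measure iteration on levels rather than an $L^\infty$ comparison with harmonic replacements. If you want to use harmonic replacement, the correct estimate is an \emph{oscillation} bound for $h_j$ (via Harnack) combined with $0\le u-h_j$ and an integral bound on $u-h_j$ coming from the equation --- not a two-sided sup bound on $u-h_j$.

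A second minor point: the hypothesis $q\le n$ is not what drives the Wolff estimate. The estimate in \cite{CGZ-24} holds for general Orlicz growth satisfying $\Delta_2/\nabla_2$; the restriction $q\le n$ in the present paper is imposed for other reasons (integrability of superharmonic functions, capacity theory), not for the potential estimate itself.
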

Define a class of functions, namely
\begin{equation*}
    \mathcal{S}_{\mu,r,L}(\Omega')=\left\{u:c_1W_{\mu,r}(x)-c_1r\leq u(x) \leq L+c_2W_{\mu,r}(x), \quad \forall x\in \Omega'\right\}
\end{equation*}
for some  contants $c_1$, $c_2$, $r\in(0,1)$, $L\geq 0$ and $\Omega'\subset\Omega$. Then we have the following lemma.
\begin{lemma}
   Suppose that $u$ is finite almost everywhere and is a nonnegative $q$-superharmonic function with the Riesz measure $\mu$ in a bounded domain $\Omega$. Let $\Omega'\subset \Omega$. For every $0<r<\min\{1, \dist(\Omega',\Omega)/2\}$, there is a constant $L<\infty$ for which $u\in \mathcal{S}_{\mu,r,L}(\Omega')$.
    \end{lemma}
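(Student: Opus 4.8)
The plan is to read off both inequalities defining $\mathcal{S}_{\mu,r,L}(\Omega')$ from the two-sided Wolff potential estimate \eqref{eq-sec2-wolff} of the preceding lemma; the only genuine work is to upgrade the upper bound, which a priori involves the ``moving'' quantity $\inf_{B(x,r)}u$, to a bound with a single constant $L$ valid for every $x\in\Omega'$. Fix $r$ as in the statement. Since $2r<\dist(\Omega',\Omega)$ (the distance from $\Omega'$ to the complement of $\Omega$) and $r<1$, for each $x\in\Omega'$ we have $B(x,r)\subset B(x,2r)\subset\Omega$ with $r\in(0,1)$, so \eqref{eq-sec2-wolff} is applicable at every such $x$. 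Its left-hand inequality is $C(W_{\mu,r}(x)-r)\le u(x)$, which is precisely the lower bound $c_1W_{\mu,r}(x)-c_1r\le u(x)$ with $c_1=C$; nothing further is needed for this part.

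For the upper bound, \eqref{eq-sec2-wolff} gives
\[
u(x)\le C\Big(\inf_{B(x,r)}u+W_{\mu,r}(x)+r\Big),\qquad x\in\Omega',
\]
and since the term $W_{\mu,r}(x)$ is allowed to appear (with coefficient $c_2$) on the right-hand side of the membership condition, it remains only to bound $\inf_{B(x,r)}u$ from above uniformly in $x$. Because $\dist(\Omega',\Omega)>2r>0$ and $\Omega$ is bounded, $\overline{\Omega'}$ is a compact subset of $\Omega$; cover it by finitely many balls $B(x_i,r/2)$, $i=1,\dots,N$, with centers $x_i\in\overline{\Omega'}$ (each $B(x_i,r/2)\subset\Omega$). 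Given $x\in\Omega'$, choose $i$ with $x\in B(x_i,r/2)$; then the triangle inequality yields $B(x_i,r/2)\subseteq B(x,r)$, hence $\inf_{B(x,r)}u\le\inf_{B(x_i,r/2)}u\le L'$, where $L':=\max_{1\le i\le N}\inf_{B(x_i,r/2)}u$. As $u\ge 0$ and $u$ is finite almost everywhere in $\Omega$ (so $u\not\equiv\infty$ on any ball $B(x_i,r/2)\subset\Omega$), each $\inf_{B(x_i,r/2)}u$ lies in $[0,\infty)$, whence $L'<\infty$. Substituting back,
\[
u(x)\le C(L'+r)+C\,W_{\mu,r}(x),\qquad x\in\Omega',
\]
so $u\in\mathcal{S}_{\mu,r,L}(\Omega')$ with $L=C(L'+r)<\infty$ and $c_2=C$.

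The only point requiring care is the uniformization just described: the pointwise estimate \eqref{eq-sec2-wolff} has a constant that a priori depends on $x$ only through $\inf_{B(x,r)}u$, and the compactness of $\overline{\Omega'}$ together with the inclusion $B(x_i,r/2)\subseteq B(x,r)$ for $x\in B(x_i,r/2)$ lets us replace the moving ball by one of finitely many fixed balls. I anticipate no serious obstacle here: the finiteness $L'<\infty$ relies solely on the hypotheses that $u$ is nonnegative and finite almost everywhere, the nonnegativity being also what makes the Wolff estimate \eqref{eq-sec2-wolff} available in the first place.
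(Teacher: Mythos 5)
Your proposal is correct, and the lower bound is handled exactly as in the paper (it is immediate from the left-hand inequality of \eqref{eq-sec2-wolff}). For the only substantive step --- making the bound on $\inf_{B(x,r)}u$ uniform over $x\in\Omega'$ --- you take a genuinely different route. The paper invokes the local summability of $q$-superharmonic functions (Proposition~\ref{prop-sec2-inte}) and the chain
\[
\inf_{B(y,r)}u\;\leq\;\Bigl(\fint_{B(y,r)}u^{\gamma}\,dx\Bigr)^{1/\gamma}\;\leq\;c\Bigl(r^{-n}\int_{\Omega'+B_r}u^{\gamma}\,dx\Bigr)^{1/\gamma}<+\infty ,
\]
so the uniform constant is an explicit $L^{\gamma}$-average of $u$ over a neighbourhood of $\Omega'$. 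You instead use compactness of $\overline{\Omega'}$, a finite cover by balls $B(x_i,r/2)$, the inclusion $B(x_i,r/2)\subseteq B(x,r)$ for $x\in B(x_i,r/2)$, and the elementary observation that an a.e.\ finite function has finite infimum on every ball of positive measure. Both arguments are valid; yours is more elementary in that it needs only nonnegativity and a.e.\ finiteness rather than the integrability proposition, while the paper's version yields a quantitative value of $L$ (controlled by $\|u\|_{L^{\gamma}}$ near $\Omega'$) that is in the spirit of how such constants are tracked elsewhere in the argument. Either way the conclusion $u\in\mathcal{S}_{\mu,r,L}(\Omega')$ follows with $c_1=c_2=C$ and $L=C(L'+r)$.
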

\begin{proof}
    The Wolff potential estimate~\eqref{eq-sec2-wolff} immediately yields the first inequality in the definition of $\mathcal{S}_{\mu,r, L}$. To establish the second inequality, it suffices to find an upper bound for $\inf\limits_{B(y,r)}u$ for any  $y\in \Omega'$. This follows directly from Proposition~\ref{prop-sec2-inte}, which guarantees the existence of a constant $\gamma>0$ such that
    \begin{equation*}
        \inf_{B(y,r)} u\leq \left(\fint_{B(y,r)}u^\gamma\,dx\right)^\frac{1}{\gamma}\leq c\left(r^{-n} \int_{\Omega'+B_{r}}u^\gamma\,dx\right)^{\frac{1}{\gamma}}<+\infty
    \end{equation*}
    for all $y\in \Omega'$, then completing the proof.
\end{proof}

\subsection{Decomposition of measures}
The $q$-capacity $\text{cap}_q(B,\Omega)$ of any $B\subset\Omega$ is defined as follows. For a  compact set $K\subset \Omega$,
\begin{equation*}
    \text{cap}_{q}(K,\Omega):=\inf\left\{ \int_{\Omega}|\nabla \varphi|^q\,dx:\varphi\in C^{\infty}_0(\Omega), \varphi\geq 1~\text{on}~ K\right\}.
\end{equation*}
For open subsets $U\subset \Omega$ and general $ E\subset \Omega$, we have
\begin{equation*}
    \text{cap}_{q}(U,\Omega):=\sup\left\{\text{cap}_{q}(K,\Omega): K\subset U~\text{and}~K~\text{is compact in}~U\right\},
\end{equation*}
and
\begin{equation*}
\text{cap}_{q}(E,\Omega):=\inf\left\{\text{cap}_{q}(U,\Omega):
~E\subset U~\text{and}~U~\text{is open in}~\Omega\right\}.
\end{equation*}
This notion of capacity enjoys all the fundamental properties of classical capacities.
We refer to \cite{BHH18,Chlebicka23mea,CK21remov,HJ22} for more details.

Moreover, for $E\subset \Omega$, we define
\begin{equation*}
\begin{split}
    \widetilde{\text{cap}}_{q}(E,\Omega):=&\sup\bigg\{\nu(E):\nu\in (W^{1,q}_{0}(\Omega))', \supp\nu\subset E, \nu\geq 0,  \\
    &\quad\quad \quad -\text{div}\left(|\nabla \omega|^{p-2}\nabla \omega+|\nabla \omega|^{q-2}\nabla \omega \right)=\nu\quad \text{such that}~0\leq \omega \leq 1\bigg\}.
\end{split}
\end{equation*}
Analogous to the proof in  \cite[Theorem 3.5]{Kil94the}, we have
\begin{equation*}
  \widetilde{\text{cap}}_{q}(E,\Omega)= \text{cap}_{q}(E,\Omega)
\end{equation*}
for $E\subset \Omega$.

A set $E$ is called polar if there exists an open neighborhood $U$ of $E$ and an $q$-superharmonic function $u$ in $U$ such that $u=\infty$ on $E$. A set $E$ is polar if and only if it is of $q$-capacity zero, that is
\begin{equation*}
    \text{cap}_{q}(E\cap U, U)=0
\end{equation*}
for all open sets $U\subset \Omega$.

\begin{remark}
A claim was established in \cite[Theorem 1.5]{HK88} that in the case of $p$-growth, a set $E$ is polar if and only if its $p$-capacity zero. For the case of $(p,q)$-growth, the proof is similar, with the key difference being that in the proof of (3.5) in \cite[Theorem 3.4]{HK88}, the test function $u^{1-p}\phi^p$ is replaced by $(u^{1-p}+u^{1-q})\phi^q$.
\end{remark}

A function $ u$ is said to be $q$-quasicontinuous if, for every $\epsilon > 0$, there exists an open set $U$ with $\text{cap}_q(U) < \epsilon$ such that $u$ is continuous on $\Omega \setminus U$. A property is said to hold $q$-quasieverywhere if it holds everywhere except on a set with $q$-capacity zero.

Let $\mathcal{M}_b$ denote the set of nonnegative bounded measures on $\Omega\subset \mathbb{R}^n$. The subset $\mathcal{M}^1_b$ consists of $q$-diffuse measure, meaning bounded measures   $\mu_0$ that do not charge the sets of $q$-capacity zero. For any bounded nonnegative Radon measure $\mu\in \mathcal{M}_b$, there exists a decomposition $\mu=\mu_{0}+\mu_{s}$, where $\mu_{0}\in \mathcal{M}^1_b$  and $\mu_s$ is singular with respect to the  $q$-capacity, being  concentrated on some set of $q$-capacity zero. Further details on this decomposition can be found in  ~\cite{Chlebicka23mea}. 

In particular, we have the following lemma. The proof follows a similar reasoning as in \cite[Lemma~2.9]{Kil11super}. We present the detailed argument here for completeness.


\begin{lemma}\label{lem-sec2}
    Let $u$ be $q$-superharmonic function that is finite almost everywhere with the Riesz measure $\mu$. Then
    \begin{equation*}
        \mu_s(\{u<+\infty\})=0,
    \end{equation*}
    where $\mu_s$ is the singular part of $\mu$.
\end{lemma}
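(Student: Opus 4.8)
The plan is to show that the singular part $\mu_s$ is concentrated on the polar set $\{u = +\infty\}$, which has $q$-capacity zero; since $\mu_s$ lives on a set of $q$-capacity zero anyway, the point is to locate that set precisely inside $\{u=+\infty\}$. First I would reduce to a local statement: fix a ball $B = B(x_0, 2R) \Subset \Omega$ and work with $u$ on $B$, where we may assume (after replacing $u$ by $u - \inf_B u$, or by using the comparison/Wolff estimates) that $u$ is nonnegative. The decomposition $\mu = \mu_0 + \mu_s$ with $\mu_0 \in \mathcal{M}_b^1$ and $\mu_s$ concentrated on a Borel set $S$ with $\mathrm{cap}_q(S,\Omega) = 0$ is available from the discussion preceding the lemma. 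It suffices to prove $\mu_s(\{u < +\infty\} \cap B) = 0$ for every such $B$.

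The key step is to exploit the Wolff potential lower bound. For a nonnegative $q$-superharmonic function on $B(x,r) \subset B(x,2r) \subset \Omega$ with $r \in (0,1)$, the estimate \eqref{eq-sec2-wolff} gives
\begin{equation*}
C\bigl(W_{\mu,r}(x) - r\bigr) \le u(x).
\end{equation*}
Hence at every point $x$ where $u(x) < +\infty$ we have $W_{\mu,r}(x) < +\infty$, i.e.
\begin{equation*}
\int_0^r g^{-1}\!\left(\frac{\mu(B(x,s))}{s^{n-1}}\right) ds < +\infty,
\end{equation*}
which forces $\mu(B(x,s))/s^{n-1} \to 0$, and in particular $\limsup_{s\to 0} \mu(B(x,s))/s^{n-q}$ behaves subcritically. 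I would then invoke the standard fact from capacitary potential theory (see e.g.\ the references on capacities cited in the excerpt, or argue via the $\widetilde{\mathrm{cap}}_q = \mathrm{cap}_q$ identity together with the definition of $\mathrm{cap}_q$) that a measure whose $q$-Wolff potential is finite at $\mu_s$-a.e.\ point cannot carry any mass on a set where the potential is finite unless that mass is absorbed into the diffuse part: more precisely, one shows that the restriction $\mu_s\llcorner\{u<+\infty\}$ is itself a $q$-diffuse measure. This is where the pointwise finiteness of $W_{\mu,r}$ is converted into the statement that $\mu_s\llcorner\{u<+\infty\}$ does not charge sets of $q$-capacity zero: any Borel set $E$ with $\mathrm{cap}_q(E,\Omega)=0$ is polar, so there is a $q$-superharmonic $v \ge 0$ with $v \equiv +\infty$ on $E$; comparing $\mu[v]$ with $\mu$ and using that $W_{\mu,r}(x)<\infty$ on $\{u<\infty\}$ while the Wolff potential estimate forces $W_{\mu,r}(x)=+\infty$ $\mathrm{cap}_q$-q.e.\ on the support of the singular part, one concludes $\mu_s(E \cap \{u<+\infty\}) = 0$. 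Since $\mu_s$ is concentrated on some set $S$ of $q$-capacity zero, taking $E = S$ yields $\mu_s(\{u<+\infty\}) = \mu_s(S \cap \{u<+\infty\}) = 0$.

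I expect the main obstacle to be the middle step: rigorously arguing that finiteness of the Wolff potential $W_{\mu,r}(x)$ at a point $x$ is incompatible with $\mu_s$ charging a set of $q$-capacity zero containing $x$. In the $p$-growth setting this is handled in \cite{Kil11super} via the characterization that $\mu_s$-a.e.\ point lies where the Wolff potential blows up; here one must check that the same mechanism survives the $(p,q)$-growth, i.e.\ that $g^{-1}$ and the modified capacity $\widetilde{\mathrm{cap}}_q$ interact the way $t^{1/(p-1)}$ and $\mathrm{cap}_p$ do. The robust route is: (i) it is classical that for $\mu \in \mathcal{M}_b$, the set $\{x : W_{\mu,r}(x) = +\infty\}$ has $q$-capacity zero when $\mu$ is $q$-diffuse, and conversely the singular part is concentrated where the potential is infinite; (ii) combining with \eqref{eq-sec2-wolff}, which says $u(x) \gtrsim W_{\mu,r}(x) - r$, we get $\{u < +\infty\} \subset \{W_{\mu,r} < +\infty\}$ up to the $r$ shift, and since $\mu_s$ is concentrated on $\{W_{\mu,r} = +\infty\}$, it cannot charge $\{u < +\infty\}$. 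The only genuinely new verification needed relative to \cite{Kil11super} is point (i) for the operator with $g(t) = t^{p-1} + t^{q-1}$, which follows from the Wolff potential machinery of \cite{CGZ-24} already quoted above and the capacity identities established in this section.
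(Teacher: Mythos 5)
Your overall strategy --- use the lower Wolff bound to see that $\{u<+\infty\}$ sits inside $\{W_{\mu,r}<+\infty\}$, and then argue that $\mu_s$ cannot charge the latter set --- is the right shape, but the proposal has a genuine gap at exactly the step that carries all the weight. Your point (i), that ``the singular part is concentrated where the potential is infinite,'' is not a classical fact you can quote here: combined with the two-sided Wolff estimate \eqref{eq-sec2-wolff} it is essentially \emph{equivalent} to the statement of the lemma, so invoking it is circular. Likewise, the passage ``there is a $q$-superharmonic $v\ge 0$ with $v\equiv+\infty$ on $E$; comparing $\mu[v]$ with $\mu$ \dots'' does not produce an argument: $\mu[v]$ and $\mu$ are unrelated measures and no comparison principle links them. (A side remark: finiteness of $\int_0^r g^{-1}(\mu(B(x,s))/s^{n-1})\,ds$ does \emph{not} force $\mu(B(x,s))/s^{n-1}\to 0$; integrability of the integrand near $0$ gives no pointwise limit. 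This slip is not used later, but it should not stand.)

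What is actually needed, and what the paper supplies, is a quantitative mechanism converting ``$u$ bounded on a capacity-null set'' into ``$\mu$ gives that set no mass.'' Concretely: take $E$ with $\mathrm{cap}_q(E,\Omega)=0$ carrying $\mu_s$, truncate to $E_k=E\cap\{u<k\}$, and fix a compact $K\subset E_k$. The lower Wolff bound gives $W_{\mu|_K,r/8}(x)\le W_{\mu,r/8}(x)\le cu(x)+1<ck$ on $K$, hence $\int W_{\mu|_K,r/8}\,d\mu|_K\le Ck\,\mu(K)<\infty$, which by \cite[Theorem 3]{CGZ-24} places $\mu|_K$ in $(W^{1,q}_0(\Omega))'$. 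One then solves the equation with datum $\mu|_K$, uses the upper Wolff bound plus the maximum principle to get $0\le v\le M$, and normalizes $\omega=v/M$ so that $\omega$ is admissible for the dual capacity; since the measure generated by $\omega$ dominates $M^{1-q}\mu|_K$, the identity $\widetilde{\mathrm{cap}}_q(K,\Omega)=\mathrm{cap}_q(K,\Omega)\le\mathrm{cap}_q(E,\Omega)=0$ forces $\mu(K)=0$, hence $\mu(E_k)=0$ for every $k$ and the lemma follows by summing over $k$. Without this (or an equivalent) chain --- finite mutual energy $\Rightarrow$ dual-space membership $\Rightarrow$ domination by the dual capacity $\Rightarrow$ zero --- your proposal asserts the conclusion rather than proving it.
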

\begin{proof}
    First, we claim that it was proved in \cite[Theorem 3]{CGZ-24} that if
    \begin{equation*}
        \int_{\Omega}W_{\mu,r}(x)\,d\mu(x)<+\infty
    \end{equation*}
    for a measure $\mu$ and for some $r>0$, then $\mu\in (W^{1,q}_{0}(\Omega))'$, where $(W^{1,q}_{0}(\Omega))'$ is the dual of $W^{1,q}_{0}(\Omega)$.

Let $E\subset\Omega$ be a set such that $\text{cap}_{q}(E)=0$ and $\mu_{s}(\Omega\setminus E)=0$. For every $k>0$ denote $E_{k}=E\cap\{u<k\}$. Fix $k>0$ and let $K\subset E_{k}$ be a compact subset.
Since $K$ is compact, the distance of $K$ and $\partial \Omega$, denoted by $r$, is positive. Using the  Wolff potential estimate \eqref{eq-sec2-wolff}, we obtain
\begin{equation*}
    W_{\mu|K,\frac{r}{8}}(x)\leq W_{\mu,\frac{r}{8}}\leq cu(x)+1<ck
\end{equation*}
for all $x\in K$, where the measure $\mu|K$ is defined by $\mu|K(B)=\mu(K\cap B)$. Thus, we have
\begin{equation*}
    \int_{\Omega}W_{\mu|K,\frac{r}{8}}(x)\,d\mu|K\leq Ck\mu(K)<+\infty,
\end{equation*}
which implies that $\mu|_{K}\in (W^{1,q}_{0}(\Omega))'$.

Let $v\in W^{1,q}_{0}(\Omega)$ be a nonnegative $q$-superharmonic function satisfying
\begin{equation*}
    \DIV(|\nabla v|^{p-2}\nabla v+|\nabla v|^{q-2}\nabla v)=\mu|_{K}
\end{equation*}
in $\Omega$. From the Wolff potential estimate \eqref{eq-sec2-wolff}, we know  that
\[v(x)\leq L+ck\]
for all $x\in K$. Since $v$ is $q$-harmonic in $\Omega\setminus K$, the maximum principle gives
\[0\leq v\leq L+ck\]
in $\Omega$. Let $M=L+ck>1$, and define $\omega=\frac{v}{M}$. Suppose that $\omega$ satisfies the equation
\begin{equation*}
    \DIV(|\nabla \omega|^{p-2}\nabla \omega+|\nabla \omega|^{q-2}\nabla \omega)=\bar{\mu}.
\end{equation*}
 It is easy to check that
\[ M^{1-q}\mu|_{K}\leq \bar{\mu} \leq M^{1-p}\mu|_{K}.\]
Thus, $\bar{\mu}\in (W_0^{1,q}(\Omega))'$.
This shows that $\omega$ is a valid test function for the dual capacity of $K$. We then obtain
\begin{equation*}
   M^{1-q}\mu|_{K}\leq  \widetilde{\text{cap}}_{q}(K,\Omega) = \text{cap}_{q}(K,\Omega)\leq \text{cap}_{q}(E,\Omega)=0.
\end{equation*}
This implies that $\mu(E_k)=0$. Therefore, we have
\begin{equation*}
    \mu_s(\{u<\infty\})\leq \mu_s(\Omega\setminus E)+\sum_{k=1}^{\infty}\mu_s(E_k)=0,
\end{equation*}
which finishes the proof.
\end{proof}

\section{Local Renormalized solution has a $q$-superharmonic representative}

\label{sec3}

In this section, we will prove that every local renormalized solution $u$ admits a $q$-superharmonic representation.  Our methods are heavily influenced by \cite{Kil11super, Kil92Degen, Kil94the,kim25wolff}.

We define the function  $g$  as 
\begin{equation*}
    g(t)=t^{p-1}+t^{q-1},  \quad \forall t\in \mathbb{R}^+.
\end{equation*}
Let $G:[0,\infty)\to [0,\infty)$ be given by
\begin{equation*}
    G(t)=\int^{t}_0 g(\tau)\,d\tau.
\end{equation*}
It follows directly that
\begin{equation}\label{eq:Gg}
    pG(t)\leq tg(t) \leq qG(t) \quad \text{for all} \quad t\geq 0.
\end{equation}
Next, we will present some algebraic inequalities involving $g$ and $G$.
Define $\bar{g}(t)=\frac{G(t)}{t}$. It is straightforward to verify the following inequalities
\begin{equation}
    p\bar{g}(t)\leq g(t)\leq q\bar{g}(t),
\end{equation}
and
\begin{equation}\label{eq:barg}
(p-1)\bar{g}(t)\leq t\bar{g}'(t)\leq (q-1)\bar{g}(t).
\end{equation}

     The following lemma is from \cite[Lemma 2.1]{kim23}.
\begin{lemma}\label{lem:Gg}
Assume that $G$ and $g$ satisfy \eqref{eq:Gg}.
 \begin{enumerate}
 \item  For all $\lambda \geq 1$, it holds that
\begin{equation*}
\lambda^p G(t) \leq G(\lambda t) \leq \lambda^q G(t),\quad \lambda^{p-1} \bar{g}(t) \leq \bar{g}(\lambda t) \leq \lambda^{q-1} \bar{g}(t)
\end{equation*}
and
\[\lambda^{\frac{1}{q-1}}\bar{g}^{-1}(t)\leq \bar{g}^{-1}(\lambda t)\leq \lambda^{\frac{1}{p-1}}\bar{g}^{-1}(t). \]
\item
For all $\lambda \leq 1$, it holds that
\begin{alignat*}{2}
\lambda^q G(t) \leq G(\lambda t) \leq \lambda^p G(t),
\quad\lambda^{q-1} \bar{g}(t) \leq \bar{g}(\lambda t) \leq \lambda^{p-1} \bar{g}(t)
\end{alignat*}
 and
 \[\lambda^{\frac{1}{p-1}}\bar{g}^{-1}(t)\leq \bar{g}^{-1}(\lambda t)\leq \lambda^{\frac{1}{q-1}}\bar{g}^{-1}(t).\]
 \end{enumerate}
\end{lemma}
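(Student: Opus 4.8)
The plan is to deduce all six inequalities from the two-sided bound \eqref{eq:Gg}, which I would first rewrite as the logarithmic-derivative estimate $p \le t g(t)/G(t) \le q$, valid for every $t>0$. I would start with the bounds for $G$. Fixing $t>0$ and viewing $\lambda \mapsto \log G(\lambda t)$ as a $C^1$ function on $(0,\infty)$, one has $\frac{d}{d\lambda}\log G(\lambda t) = t g(\lambda t)/G(\lambda t)$, and evaluating \eqref{eq:Gg} at the point $\lambda t$ gives $p/\lambda \le \frac{d}{d\lambda}\log G(\lambda t) \le q/\lambda$. Integrating this over $[1,\lambda]$ when $\lambda \ge 1$ produces $p\log\lambda \le \log G(\lambda t) - \log G(t) \le q\log\lambda$, i.e. $\lambda^p G(t) \le G(\lambda t) \le \lambda^q G(t)$; integrating instead over $[\lambda,1]$ when $\lambda \le 1$ swaps the roles of $p$ and $q$ and gives $\lambda^q G(t) \le G(\lambda t) \le \lambda^p G(t)$.

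The $\bar g$-bounds then follow at once: since $\bar g(\lambda t) = G(\lambda t)/(\lambda t)$, dividing the displayed $G$-bounds by $\lambda t$ yields $\lambda^{p-1}\bar g(t) \le \bar g(\lambda t) \le \lambda^{q-1}\bar g(t)$ for $\lambda \ge 1$ and the reversed inequality for $\lambda \le 1$. (Alternatively one may run the same logarithmic-derivative argument on $\lambda \mapsto \log\bar g(\lambda t)$ using \eqref{eq:barg}.) Before handling $\bar g^{-1}$, I would note that $\bar g$ is a continuous, strictly increasing bijection of $[0,\infty)$ onto itself --- monotonicity follows from $\bar g'(t) \ge (p-1)\bar g(t)/t > 0$, while $\bar g(0)=0$ and $\bar g(t) \to \infty$ as $t\to\infty$ --- so that $\bar g^{-1}$ is well defined and increasing.

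For the inverse estimates I would fix $s>0$, set $t = \bar g^{-1}(s)$, and exploit the $\bar g$-bounds with a judicious choice of scaling factor. When $\lambda \ge 1$, taking $\mu = \lambda^{1/(p-1)} \ge 1$ gives $\bar g(\mu t) \ge \mu^{p-1}\bar g(t) = \lambda s$, hence $\mu t \ge \bar g^{-1}(\lambda s)$ and $\bar g^{-1}(\lambda s) \le \lambda^{1/(p-1)}\bar g^{-1}(s)$; taking instead $\mu = \lambda^{1/(q-1)} \ge 1$ gives $\bar g(\mu t) \le \mu^{q-1}\bar g(t) = \lambda s$, hence $\bar g^{-1}(\lambda s) \ge \lambda^{1/(q-1)}\bar g^{-1}(s)$. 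When $\lambda \le 1$, the same two choices $\mu = \lambda^{1/(p-1)} \le 1$ and $\mu = \lambda^{1/(q-1)} \le 1$ together with the part-(2) $\bar g$-bounds interchange which exponent supplies the upper and the lower bound, giving $\lambda^{1/(p-1)}\bar g^{-1}(s) \le \bar g^{-1}(\lambda s) \le \lambda^{1/(q-1)}\bar g^{-1}(s)$.

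I do not anticipate a real obstacle: the only places that demand care are the bookkeeping of inequality directions when $\lambda \le 1$ (where integration runs backwards and $p,q$ trade places) and the preliminary check that $\bar g$ is invertible on all of $[0,\infty)$, which legitimizes the substitution $t = \bar g^{-1}(s)$; after that each remaining step is a single algebraic line. A calculus-free alternative would be to establish the $G$-bounds first for integer and rational $\lambda$ by iterating \eqref{eq:Gg} and then pass to real $\lambda$ by monotonicity, but the logarithmic-derivative route is shorter and cleaner.
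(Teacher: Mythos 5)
Your proof is correct. Note that the paper itself offers no argument for this lemma: it is quoted verbatim from \cite[Lemma 2.1]{kim23}, so there is no internal proof to compare against. Your logarithmic-derivative route --- rewriting \eqref{eq:Gg} as $p/\lambda \le \frac{d}{d\lambda}\log G(\lambda t) \le q/\lambda$ and integrating --- is the standard way such doubling estimates are proved for general Young functions satisfying an index condition, and is equivalent to observing that $G(t)/t^p$ is non-decreasing while $G(t)/t^q$ is non-increasing. The deduction of the $\bar g$-bounds by dividing through by $\lambda t$, the verification that $\bar g$ is a continuous increasing bijection of $[0,\infty)$ onto itself, and the substitution $t=\bar g^{-1}(s)$ with the two choices $\mu=\lambda^{1/(p-1)}$ and $\mu=\lambda^{1/(q-1)}$ all check out, including the sign bookkeeping in the $\lambda\le 1$ case. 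One remark: for the specific $g(t)=t^{p-1}+t^{q-1}$ of this paper one has $G(\lambda t)=\lambda^p\,t^p/p+\lambda^q\,t^q/q$, so the $G$-bounds follow in one line from $\lambda^p\le\lambda^q$ (for $\lambda\ge1$) without any calculus; your argument has the advantage of using only the abstract hypothesis \eqref{eq:Gg}, which is how the lemma is stated.
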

A further important property of growth functions is given by the following inequality: for any $a, b \geq 0$ and $\varepsilon > 0$, we have the inequality
\begin{equation}\label{eq-alg}
g(a)b \leq \varepsilon g(a)a + g(b/\varepsilon)b.
\end{equation}
This inequality can be derived by splitting the cases
$b \leq \varepsilon a$ and $b > \varepsilon a$.

The \emph{conjugate function} of a Young function $H$ is defined as \[H^\ast(\tau) = \sup_{t \geq 0} (t\tau-H(t))\quad \text{for all}~ \tau \geq 0.\]
We obtain the well-known  \emph{Young's inequality}
\begin{equation}\label{eq-Young}
t\tau \leq H(t) + H^\ast(\tau) \quad\text{for all}~ t, \tau \geq 0,
\end{equation}
by definition. We refer to \cite[Lemma 2.2]{kim23} for the proof of
the following lemma.

\begin{lemma}\label{lem-H}
Let $H(t) = G(t^{1/a})$ for $a \in (0, p]$. Then $H^\ast(G(t)/t^a) \sim G(t)$. In particular, $G^\ast(g(t)) \sim G(t)$.
\end{lemma}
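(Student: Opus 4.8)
The plan is to compute the conjugate $H^{\ast}$ directly and to reduce the evaluation of $H^{\ast}$ at $G(t)/t^{a}$ to a one-parameter supremum controlled by the scaling inequalities of Lemma~\ref{lem:Gg}. First I would note, for later use, that $H(t)=G(t^{1/a})$ is a genuine Young function: it vanishes at $0$, is increasing, superlinear, and convex, the last point because, with $s=t^{1/a}$, one has $H'(t)=\frac1a\,g(s)\,s^{\,1-a}$, and since $a\le p$ and $s\,g'(s)\ge(p-1)g(s)\ge(a-1)g(s)$ the map $s\mapsto g(s)s^{\,1-a}$ is nondecreasing. The computation itself does not need this: directly from the definition of the conjugate and the substitution $v=u^{1/a}$,
\[
H^{\ast}(\tau)=\sup_{u\ge0}\bigl(u\tau-G(u^{1/a})\bigr)=\sup_{v\ge0}\bigl(v^{a}\tau-G(v)\bigr).
\]

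Setting $\tau=G(t)/t^{a}$ and then $v=\lambda t$ gives
\[
H^{\ast}\!\Bigl(\frac{G(t)}{t^{a}}\Bigr)=G(t)\,\sup_{\lambda\ge0}\Bigl(\lambda^{a}-\frac{G(\lambda t)}{G(t)}\Bigr),
\]
so the assertion $H^{\ast}(G(t)/t^{a})\sim G(t)$ amounts to bounding $\Phi:=\sup_{\lambda\ge0}\bigl(\lambda^{a}-G(\lambda t)/G(t)\bigr)$ above and below by positive constants depending only on $p,q,a$. For the upper bound I would split at $\lambda=1$: when $\lambda\ge1$, Lemma~\ref{lem:Gg}(1) gives $G(\lambda t)\ge\lambda^{p}G(t)\ge\lambda^{a}G(t)$, so the bracket is nonpositive; when $\lambda\le1$, Lemma~\ref{lem:Gg}(2) gives $G(\lambda t)\ge\lambda^{q}G(t)$, so the bracket is at most $\lambda^{a}-\lambda^{q}$, which (as $a\le q$) is bounded on $[0,1]$. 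Hence $\Phi\le c(p,q,a)<\infty$.

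The lower bound is the only nontrivial point and I expect it to be the main obstacle. Testing with a fixed $\lambda_{0}\in(0,1)$ and using Lemma~\ref{lem:Gg}(2) in the form $G(\lambda_{0}t)\le\lambda_{0}^{p}G(t)$ yields $\Phi\ge\lambda_{0}^{a}-\lambda_{0}^{p}$, a strictly positive constant independent of $t$ whenever $a<p$ (for instance $\lambda_{0}=\tfrac12$). At the borderline $a=p$ this crude choice degenerates and one must argue more carefully, using the genuine two-exponent structure of $G$ rather than a single scaling estimate; this finer point is precisely what one imports from \cite[Lemma~2.2]{kim23}. Combining with the upper bound then gives $H^{\ast}(G(t)/t^{a})\sim G(t)$.

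Finally, the ``in particular'' assertion needs none of the above. Since equality holds in Young's inequality \eqref{eq-Young} at $\tau=G'(t)=g(t)$, one has $t\,g(t)=G(t)+G^{\ast}(g(t))$, hence $G^{\ast}(g(t))=t\,g(t)-G(t)$; combining with \eqref{eq:Gg} and $p>1$ gives $(p-1)G(t)\le G^{\ast}(g(t))\le(q-1)G(t)$, that is, $G^{\ast}(g(t))\sim G(t)$.
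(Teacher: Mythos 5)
The paper offers no proof of this lemma at all --- it only points to \cite[Lemma 2.2]{kim23} --- so your write-up is compared against that citation rather than against an argument in the text. Your route is the natural one and, for the most part, complete: the substitution $H^\ast(\tau)=\sup_{v\ge0}\bigl(v^a\tau-G(v)\bigr)$ followed by $v=\lambda t$ reduces everything to $\Phi=\sup_{\lambda\ge0}\bigl(\lambda^a-G(\lambda t)/G(t)\bigr)$; your upper bound is correct (and can be sharpened to $\Phi\le1$, since for $\lambda\le1$ the bracket is at most $\lambda^a\le1$ and for $\lambda\ge1$ it is nonpositive by $G(\lambda t)\ge\lambda^pG(t)\ge\lambda^aG(t)$); your lower bound $\Phi\ge\lambda_0^a-\lambda_0^p>0$ for $a<p$ is correct; and your proof of the ``in particular'' clause via the Young equality $G^\ast(g(t))=tg(t)-G(t)$ together with \eqref{eq:Gg} is clean and self-contained.

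The one point you defer --- the lower bound at the endpoint $a=p$ --- is not a finer point recoverable from \cite{kim23}: the two-sided comparison is genuinely false there, so you were right to be suspicious but wrong to expect a rescue. If $p=q$, then $H(t)=G(t^{1/p})=2t/p$ is linear and $H^\ast\bigl(G(t)/t^p\bigr)=H^\ast(2/p)=0$. If $p<q$, a direct computation gives $H^\ast\bigl(G(t)/t^p\bigr)=c(p,q)\,t^q$ with $c(p,q)=(q-p)(p/q)^{p/(q-p)}q^{-2}$, which cannot be bounded below by a constant multiple of $G(t)\asymp t^p$ as $t\to0$. The statement that is actually true for all $a\in(0,p]$, that \cite[Lemma 2.2]{kim23} proves, and that is the only thing the paper uses (in the estimate of $J_2$ in Lemma~\ref{lem:bound}, where Young's inequality is applied with $a=p$), is the one-sided bound $H^\ast\bigl(G(t)/t^a\bigr)\le G(t)$ --- which your argument establishes in full. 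So your proof covers everything that is true and everything that is needed; the residual ``gap'' is an overstatement in the lemma as transcribed (the ``$\sim$'' should be a one-sided inequality in the first claim, while the two-sided version is correct for the ``in particular'' part and for $a<p$), and it would be better to say so explicitly than to defer.
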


For $k>0$ and for $s\in \mathbb{R}$, we define $T_{k}(s):=\max\{-k,\min\{k,s\}\}$. As shown in \cite[Lemma 2.1]{BPL22}
for every function $u$, such that $T_{k}(u)\in W^{1,q}(\Omega)$ for every $k>0$, there exists a  measurable function $Z_{u}:\Omega\to \mathbb{R}^n$ such that
\[\nabla T_{k}(u)=\chi_{\{|u|<k\}}Z_u\quad \text{for a.e. in }\Omega\text{ and for every } k>0.\]
With an abuse of notation, we denote $Z_u$ simply by $\nabla u$ and call it a generalized gradient.

 Next, we shall give the definition of renormalized solutions.
\begin{definition}\label{Def:renormalized}
  Suppose that $\mu$ is a nonnegative bounded Radon measure in an open set $\Omega$. We say a measurable function $u$ is a local renormalized solution to \eqref{eq:main} in $\Omega$ if
\begin{equation}\label{eq:renormalizedpro}
\begin{split}
    &T_{k}(u) \in W^{1,q}_{\rm loc}(\Omega) \qquad\text{for all}\quad k>0, \\
    &| u|^{q-1}\in L_{\rm loc}^m(\Omega)\quad \text{for all}\quad 1\leq m<\frac{n}{n-q},
\end{split}
\end{equation}
and
\begin{equation}\label{eq-sec3re}
\begin{split}
   &\int_{\Omega}(|\nabla u|^p+|\nabla u|^q)h'(u)\phi\,dx+\int_{\Omega}(|\nabla u|^{p-2}\nabla u+|\nabla u|^{q-2}\nabla u)\cdot \nabla \phi h(u)\,dx\\
   &\quad= \int_{\Omega}h(u)\phi\,d\mu_0+ h(+\infty)\int_{\Omega}\phi\,d\mu_s
\end{split}
\end{equation}
holds for any $h\in W^{1,\infty}(\mathbb{R})$ having $h'$ with compact support and for all $\phi \in C^{\infty}_{0}(\Omega)$.
\end{definition}

As shown in \cite{Chlebicka23mea}, for any nonnegative bounded measure $\mu$, problem \eqref{eq:main} admits a renormalized solution satisfying \eqref{eq-sec3re} with measures such that $\supp\mu_{0}\subset\{|u|<+\infty\}$ and $\supp \mu_s \in \cap_{k>0}\{u>k\}$.  In addition, by \cite[Theorem 1.8]{Chlebicka23mea} and \cite[Remark 1.9]{Chlebicka23mea}, we have the following remark.
\begin{remark}
   For a measurable function $u$ with $T_{k}(u)\in W_0^{1,q}(\Omega)$ is a renormalized solution to problem \eqref{eq:main} if and only if it is an approximable solution. And if $u$ is a renormalized (equivalently approximable) solution, then
   \begin{equation*}
       \int_{\Omega}(|\nabla u|^{p-2}\nabla u+|\nabla u|^{q-2}\nabla u) \cdot \nabla \phi\,dx =\int_{\Omega} \phi\,d\mu
   \end{equation*}
   for all $\phi\in C^{\infty}_0(\Omega)$.
\end{remark}

We proceed to prove that $u$ is locally essentially bounded from below. Let $u$ be a local renormalized solution of \eqref{eq:main} in $\Omega$. Note that $u=u_{+}-u_{-}$. Choose first the test function
    \begin{equation*}
        h_{\epsilon}(u)=\frac{1}{\epsilon}\min\{\epsilon,u_+\}-1,\quad \epsilon>0,
    \end{equation*}
    and let $h\in W^{1,\infty}(\mathbb{R})$ be a non-negative function such that $h'$ has compact support. Let $\phi\in C^{\infty}_{0}(\Omega)$ be nonnegative. On one hand, we have
    \begin{equation*}
        h_{\epsilon}(+\infty)h(+\infty)\int_{\Omega}\phi\,d\mu_s=0, \quad \int_{\Omega}h_{\epsilon}(u)h(u)\phi\,d\mu_0\leq 0,
    \end{equation*}
    and
    \begin{equation*}
        \int_{\Omega}(|\nabla u|^p+|\nabla u|^q)h'_{\epsilon}(u)h(u)\phi\,dx\geq 0.
    \end{equation*}
    On the other hand, the dominated convergence theorem gives
    \begin{equation*}
    \begin{split}
        &\int_{\Omega}\left(|\nabla u|^{p-2}\nabla u+|\nabla u|^{q-2}\nabla u\right)\cdot \nabla (h(u)\phi)h_{\epsilon}(u)\,dx \\
        &\quad \xrightarrow {\epsilon\to 0}\int_{\Omega}\left(|\nabla u|^{p-2}\nabla u_{-}+|\nabla u|^{q-2}\nabla u_{-}\right)\cdot\nabla (h(u)\phi)\,dx.
    \end{split}
    \end{equation*}
    Thus $\bar{u}:=u_{-}$ satisfies
    \begin{equation*}
        \min\{\bar{u},k\}\in W^{1,q}_{\rm loc}(\Omega),\quad k>0,
    \end{equation*}
    and
    \begin{equation}\label{eq:rebound}
        \int_{\Omega}\left(|\nabla \bar{u}|^{p-2}\nabla \bar{u}+|\nabla \bar{u}|^{q-2}\nabla \bar{u}\right)\cdot \nabla (h(\bar{u})\phi)\,dx \leq 0
    \end{equation}
    for all $\phi$ and $h$ as above. This means that $\bar{u}$ is a nonnegative distributional subsolution for which a priori integrability requirements are not necessarily fulfilled.

We now show that $\bar{u}$ is locally bounded which means that $u$ is locally essentially bounded below. Then the assumption $T_{k}(u)\in W_{\rm loc}^{1,q}(\Omega)$ directly implies that $\min\{u,k\}\in W^{1,q}_{\rm loc}(\Omega)$ for all $k>0$.

\begin{lemma}\label{lem:re}
   Assume that $\lambda>0$ and $\delta>1$. Let $u$ be a function defined in $B_R$ and let $\eta$ be a function satisfying $0\leq \eta \leq 1$ and $|\nabla \eta|\leq \frac{C}{R}$. There exist constants $C>0$, depending only on $p$, $q$ and $\delta$, such that
    \begin{equation}\label{eq:leminequ}
  \begin{split}
     &g(|\nabla u|)\frac{\nabla u}{|\nabla u|}\nabla(\Phi(v)\eta^q)\\
     &\quad \geq \frac{C}{R}\left(\frac{\bar{g}(\lambda)}{\bar{g}(\lambda+v)}\right)^{\frac{\delta-1}{q-1}}\frac{G(|\nabla u|)}{\lambda+v}\eta^q-\frac{C\bar{g}(\lambda)}{R}\left(\frac{\bar{g}(\lambda+v)}{\bar{g}(\lambda)}\right)^{\delta},
  \end{split}
    \end{equation}
    where $v=\frac{u}{R}$ and $\Phi(v)=1-\left(\frac{\bar{g}(\lambda)}{\bar{g}(\lambda+v)}\right)^{\frac{\delta-1}{q-1}}$.
\end{lemma}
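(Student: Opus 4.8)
The plan is to expand the left-hand side using the product rule for the gradient of $\Phi(v)\eta^q$, separate the "good" term (involving $\Phi'(v)$, which carries a definite sign) from the "bad" term (involving $\nabla(\eta^q) = q\eta^{q-1}\nabla\eta$), and then absorb the bad term using Young's-type inequality \eqref{eq-alg}. First I would compute
\[
\nabla\bigl(\Phi(v)\eta^q\bigr) = \Phi'(v)\frac{\nabla u}{R}\,\eta^q + \Phi(v)\,q\eta^{q-1}\nabla\eta,
\]
so that
\[
g(|\nabla u|)\frac{\nabla u}{|\nabla u|}\cdot\nabla\bigl(\Phi(v)\eta^q\bigr)
= \frac{\Phi'(v)}{R}\,g(|\nabla u|)|\nabla u|\,\eta^q + \Phi(v)\,q\,g(|\nabla u|)\,\frac{\nabla u}{|\nabla u|}\cdot\eta^{q-1}\nabla\eta.
\]
For the first term I would use $g(t)t\geq pG(t)$ from \eqref{eq:Gg} together with the explicit derivative
\[
\Phi'(v) = \frac{\delta-1}{q-1}\left(\frac{\bar g(\lambda)}{\bar g(\lambda+v)}\right)^{\frac{\delta-1}{q-1}}\frac{\bar g'(\lambda+v)}{\bar g(\lambda+v)} \geq \frac{(\delta-1)(p-1)}{q-1}\left(\frac{\bar g(\lambda)}{\bar g(\lambda+v)}\right)^{\frac{\delta-1}{q-1}}\frac{1}{\lambda+v},
\]
where the last inequality is \eqref{eq:barg}; this produces exactly the first (positive) term on the right-hand side of \eqref{eq:leminequ}, with $C$ depending on $p,q,\delta$.

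For the bad term I would bound $|\Phi(v)|\leq 1$, $|\nabla\eta|\leq C/R$, and then apply \eqref{eq-alg} with $a = |\nabla u|$ and $b = \eta$ (and a suitably chosen $\varepsilon$ comparable to $(\bar g(\lambda)/\bar g(\lambda+v))^{(\delta-1)/(q-1)}$ so that the resulting $\varepsilon g(a)a$ piece is dominated by — more precisely, can be absorbed after halving — the good first term). Concretely,
\[
q\,g(|\nabla u|)\,\eta^{q-1}\,\frac{C}{R}
\leq \frac{C}{R}\Bigl(\varepsilon\, g(|\nabla u|)|\nabla u| + g(\eta/\varepsilon)\eta\Bigr)\eta^{q-1},
\]
and since $g(|\nabla u|)|\nabla u|\leq qG(|\nabla u|)$ the first piece is $\le \varepsilon\frac{Cq}{R}G(|\nabla u|)\eta^{q-1}$; choosing $\varepsilon = c\,(\lambda+v)^{-1}\left(\frac{\bar g(\lambda)}{\bar g(\lambda+v)}\right)^{(\delta-1)/(q-1)}$ with $c$ small makes this at most half of the good term. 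The remaining piece $\frac{C}{R}g(\eta/\varepsilon)\eta^q\leq \frac{C}{R}g(1/\varepsilon)$ must then be shown to be $\le \frac{C\bar g(\lambda)}{R}\left(\frac{\bar g(\lambda+v)}{\bar g(\lambda)}\right)^{\delta}$; here I would use $g(1/\varepsilon)\sim \bar g(1/\varepsilon)$ together with the scaling bounds in Lemma~\ref{lem:Gg} applied to $\bar g^{-1}$, rewriting $1/\varepsilon \sim (\lambda+v)\bar g^{-1}\!\left(\text{(power of }\bar g(\lambda+v)/\bar g(\lambda))\right)$ and carefully tracking the exponents so that all powers of $\bar g(\lambda+v)/\bar g(\lambda)$ collapse to the single power $\delta$ claimed.

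The main obstacle is precisely this last bookkeeping step: one has to choose $\varepsilon$ so that the "good-term absorption" works and, simultaneously, the leftover term $\frac{C}{R}g(\eta/\varepsilon)\eta^q$ produces exactly the factor $\bar g(\lambda)\,(\bar g(\lambda+v)/\bar g(\lambda))^{\delta}$ — no more, no less — without picking up stray negative powers of $\lambda+v$ or $\eta$. Because $g$, $\bar g$, $G$ are all only comparable (not equal) to powers, every estimate must be done via the one-sided scaling inequalities of Lemma~\ref{lem:Gg} for $\lambda\gtrless 1$; the delicate point is that $\lambda+v$ may be either smaller or larger than $1$, so one must split into those two regimes and check that the chosen $\varepsilon$ and the constant $\delta$ exponent survive in both. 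The condition $\delta>1$ is used to guarantee $\Phi'(v)>0$ (so the good term has the right sign) and that $\Phi(v)\in[0,1)$, which is what allows the crude bound $|\Phi(v)|\le 1$ in the bad term; I would flag at the outset that $C$ blows up as $\delta\downarrow 1$.
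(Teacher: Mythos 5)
Your proposal follows the paper's proof essentially verbatim: the same product-rule decomposition, the same use of \eqref{eq:Gg} and \eqref{eq:barg} for the good term, the same choice $\varepsilon \sim \bigl(\bar{g}(\lambda)/\bar{g}(\lambda+v)\bigr)^{\frac{\delta-1}{q-1}}\bigl(R(\lambda+v)\bigr)^{-1}$ in the Young-type absorption \eqref{eq-alg}, and the same final appeal to Lemma~\ref{lem:Gg}. The bookkeeping you flag does close, and more easily than you fear: since $v\ge 0$ the rescaling factor $\bigl(\bar{g}(\lambda+v)/\bar{g}(\lambda)\bigr)^{\frac{\delta-1}{q-1}}$ is $\ge 1$, so Lemma~\ref{lem:Gg}(1) gives $\bar{g}\bigl(\bigl(\bar{g}(\lambda+v)/\bar{g}(\lambda)\bigr)^{\frac{\delta-1}{q-1}}(\lambda+v)\bigr)\le \bigl(\bar{g}(\lambda+v)/\bar{g}(\lambda)\bigr)^{\delta-1}\bar{g}(\lambda+v)=\bar{g}(\lambda)\bigl(\bar{g}(\lambda+v)/\bar{g}(\lambda)\bigr)^{\delta}$ directly, with no case split on $\lambda+v\gtrless 1$ required.
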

\begin{proof}
According to \eqref{eq:barg}, we have
    \begin{equation*}
        \begin{split}
            \nabla (\Phi(v)\eta^q)&=\nabla \Phi(v) \eta^q+ q\Phi(v)\eta^{q-1}\nabla \eta\\
            &\geq \frac{(\delta-1)(p-1)}{q-1}\left(\frac{\bar{g}(\lambda)}{\bar{g}(\lambda+v)}\right)^{\frac{\delta-1}{q-1}}\frac{\nabla u}{R(\lambda+v)}\eta^q-q\Phi(v)\eta^{q-1}|\nabla \eta|.
        \end{split}
    \end{equation*}
Let $A$ denote the left-hand side of~\eqref{eq:leminequ}. Then we obtain
\begin{equation*}
    \begin{split}
        A &\geq \frac{(\delta-1)(p-1)p}{(q-1)}\left(\frac{\bar{g}(\lambda)}{\bar{g}(\lambda+v)}\right)^{\frac{\delta-1}{q-1}}\frac{G(|\nabla u|)}{R(\lambda+v)}\eta^q\\
        &\quad -q^2\bar{g}(|\nabla u|)\Phi(v)\eta^{q-1}|\nabla \eta|\\
        &:=I_1+I_2.
    \end{split}
\end{equation*}
By~\eqref{eq-alg}, we have
\begin{equation*}
    \bar{g}(|\nabla u|)\frac{|\nabla \eta|}{\eta}\Phi(v)\leq \epsilon G(|\nabla u|)+\bar{g}\left(\frac{1}{\epsilon}\frac{|\nabla \eta|}{\eta}\right)\frac{|\nabla \eta|}{\eta}
\end{equation*}
for any $\epsilon>0$. Taking
\begin{equation*}
    \epsilon=\frac{(\delta-1)p(p-1)}{2Rq^2(q-1)}\left(\frac{\bar{g}(\lambda)}{\bar{g}(\lambda+v)}\right)^{\frac{\delta-1}{q-1}}\frac{1}{\lambda+v},
\end{equation*}
we deduce that
\begin{equation*}
    I_2 \geq -\frac{1}{2}I_1-q^2\bar{g}\left(\frac{1}{\epsilon}\frac{|\nabla \eta|}{\eta}\right)|\nabla \eta|\eta^{q-1}.
\end{equation*}
Since $\eta \leq 1$, $|\nabla \eta|\leq \frac{C}{R}$ and
 \begin{equation*}
    \epsilon \geq C\left(\frac{\bar{g}(\lambda)}{\bar{g}(\lambda+v)}\right)^{\frac{\delta-1}{q-1}}\frac{1}{R(\lambda+v)},
\end{equation*}
we obtain from Lemma~\ref{lem:Gg} that
\begin{equation*}
    \begin{split}
&\bar{g}\left(\frac{1}{\epsilon}\frac{|\nabla \eta|}{|\eta|}\right)\frac{|\nabla \eta|}{\eta}\eta^{q}\\
&\quad \leq \frac{C}{R}\bar{g}\left(\left(\frac{\bar{g}(\lambda+v)}{\bar{g}(\lambda)}\right)^{\frac{\delta-1}{q-1}}(\lambda+v)\right)\leq \frac{C}{R}\bar{g}(\lambda)\left(\frac{\bar{g}(\lambda+v)}{\bar{g}(\lambda)}\right)^{\delta}.
    \end{split}
\end{equation*}
Thus, we conclude that
\begin{equation*}
    \begin{split}
        A&\geq \frac{C}{R}\left(\frac{\bar{g}(\lambda)}{\bar{g}(\lambda+v)}\right)^{\frac{\delta-1}{q-1}}\frac{G(|\nabla u|)}{\lambda+v}\eta^q
        -\bar{g}(\lambda)\frac{C}{R}\left(\frac{\bar{g}(\lambda+v)}{\bar{g}(\lambda)}\right)^{\delta}.
    \end{split}
\end{equation*}
\end{proof}

\begin{lemma}\label{lem:bound}
   Assume that $1<\delta<\frac{n}{n-(p-1)}$.  Let $u$ be a local renormalized solution to problem \eqref{eq:main} in $B_{R}=B_{R}(x_0)$. Define $v=\frac{\bar{u}}{R}$ with $\bar{u}=u_{-}$. Then, there exists a constant $C_0$ such that
   \begin{equation}\label{eq:b1}
       \left(\fint_{B_{\frac{R}{2}}}\left(\frac{\bar{g}(v)}{\bar{g}(\lambda)}\right)^{\delta}\,dx\right)^{\frac{p}{\gamma}}\leq C_0 \left(\theta^{\frac{p}{\gamma}}_{R}+\fint_{B_R}\left(\frac{\bar{g}(v)}{\bar{g}(\lambda)}\right)^{\delta}\,dx
       \right)
   \end{equation}
   for any $\lambda>0$, where $\gamma$  is a constant to be fixed later. Moreover, there exists a constant $C>0$ such that
   \begin{equation}\label{eq:b2}
       \left(\fint_{B_{\frac{R}{2}}} \bar{g}^{\delta}(v)\,dx\right)^{\frac{1}{\delta}} \leq C\theta^{\rho_1}_{R}\left(\fint_{B_{R}}\bar{g}^{\delta}(v)\,dx\right)^{\frac{1}{\delta}},
   \end{equation}
   where $\rho_1=\frac{1-\frac{p}{\gamma}}{\delta}$ and $\theta_{R}=\frac{|B_{R}\cap \{v>0\}|}{|B_{R}|}$.
\end{lemma}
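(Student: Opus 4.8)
The plan is to combine the pointwise differential inequality from Lemma~\ref{lem:re} with the subsolution inequality \eqref{eq:rebound} for $\bar u = u_-$, and then run a Moser/De~Giorgi-type iteration. First I would fix a cutoff $\eta \in C_0^\infty(B_R)$ with $\eta \equiv 1$ on $B_{R/2}$, $0\le\eta\le1$, $|\nabla\eta|\le C/R$, and use $h(\bar u)\phi = \Phi(v)\eta^q$ (suitably truncated so that it is admissible in \eqref{eq:rebound}: $\Phi$ is bounded, increasing, with $\Phi'$ compactly supported after truncation of $\bar u$, and one passes to the limit by monotone/dominated convergence using Proposition~\ref{prop-sec2-inte}-type integrability that \eqref{eq:renormalizedpro} grants). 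Plugging the test function into \eqref{eq:rebound} and invoking Lemma~\ref{lem:re} pointwise, the gradient term on the left is bounded below by
\[
\frac{C}{R}\Big(\tfrac{\bar g(\lambda)}{\bar g(\lambda+v)}\Big)^{\frac{\delta-1}{q-1}}\frac{G(|\nabla u|)}{\lambda+v}\,\eta^q
\;-\;\frac{C\bar g(\lambda)}{R}\Big(\tfrac{\bar g(\lambda+v)}{\bar g(\lambda)}\Big)^{\delta}\chi_{\{v>0\}},
\]
so after integrating we obtain a Caccioppoli inequality: the weighted Dirichlet energy of $\bar u$ over $B_{R/2}$ is controlled by $R^{-1}\bar g(\lambda)\int_{B_R\cap\{v>0\}}(\bar g(\lambda+v)/\bar g(\lambda))^\delta\,dx$ plus the same quantity with $\lambda+v$ replaced by $v$ where convenient.

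Next I would convert this energy bound into an $L^{\delta\cdot(\text{something})}$ gain via Sobolev. The natural substitution is to introduce $w$ with $\nabla w \sim (\bar g(\lambda)/\bar g(\lambda+v))^{(\delta-1)/(q(q-1))}\,(\lambda+v)^{-1/q}\nabla u\,\eta$ (i.e. $w$ is, up to constants, $\eta$ times a power of $\bar g(\lambda+v)/\bar g(\lambda)$), chosen so that $|\nabla w|^q$ matches the left-hand integrand of the Caccioppoli inequality. Applying the Sobolev inequality $\|w\|_{L^{q^*}}^q \le C\|\nabla w\|_{L^q}^q$ and unwinding the definition of $w$ produces, after using the power-function estimates of Lemma~\ref{lem:Gg} to compare $\bar g(\lambda+v)$ with $\bar g(v)$ and $\bar g(\lambda)$, precisely an inequality of the shape
\[
\Big(\fint_{B_{R/2}}\big(\bar g(v)/\bar g(\lambda)\big)^{\delta\frac{q^*}{q}}\,dx\Big)^{q/q^*}
\le C\Big(\theta_R^{?}+\fint_{B_R}\big(\bar g(v)/\bar g(\lambda)\big)^{\delta}\,dx\Big).
\]
Choosing the free exponent $\gamma := q^*/q \cdot p$ (or whatever value makes the reverse-Hölder exponents match the statement — this is where $\rho_1 = (1-p/\gamma)/\delta$ comes from) yields \eqref{eq:b1}. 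Then \eqref{eq:b2} follows from \eqref{eq:b1} by a standard argument: take $\lambda \to \big(\fint_{B_R}\bar g^\delta(v)\,dx\big)^{1/\delta}$-type scale (or optimize over $\lambda$), absorb, and use that $\theta_R\le1$ together with $\theta_R^{p/\gamma}$ being a small factor when $|B_R\cap\{v>0\}|$ is small; the exponent bookkeeping gives the claimed $\rho_1$.

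The main obstacle, I expect, is twofold. First, the \emph{admissibility} of the test function $\Phi(v)\eta^q$ in \eqref{eq:rebound}: $\bar u$ is only known to be a distributional subsolution without a priori integrability, so one must truncate $\bar u$ at level $k$, verify $h_k$ has compactly supported derivative, use \eqref{eq:renormalizedpro} and the already-established $W^{1,q}_{\rm loc}$ bound on $T_k(\bar u)$ to justify the integration by parts, and only then let $k\to\infty$ controlling the error terms — the $(p,q)$-gap means the natural energy is the Orlicz energy $G(|\nabla u|)$ rather than $|\nabla u|^p$ or $|\nabla u|^q$, so all absorptions must be done at the level of $G$, $\bar g$ via \eqref{eq-alg} and Lemma~\ref{lem-H}, never by raw Young's inequality with a single power. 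Second, the \emph{exponent arithmetic}: tracking how the weights $(\bar g(\lambda)/\bar g(\lambda+v))^{(\delta-1)/(q-1)}$ transform under the Sobolev substitution and verifying that the hypothesis $1<\delta<\frac{n}{n-(p-1)}$ is exactly what guarantees the resulting self-improving exponent on the left of \eqref{eq:b1} strictly exceeds the one on the right (so that a finite Moser iteration closes and produces the sup bound needed downstream). I would handle the second point by keeping $\delta$ and $\gamma$ symbolic until the Sobolev step forces the relation, then checking the inequality $\delta q^*/q > \delta$, i.e. $q^*>q$, is automatic, while the coupling to $p$ (hence the $p-1$ in the range of $\delta$) enters through the lower bound $G(|\nabla u|)\gtrsim |\nabla u|^p$ used to feed Sobolev.
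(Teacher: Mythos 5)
Your skeleton coincides with the paper's: derive a Caccioppoli-type inequality by testing \eqref{eq:rebound} with $\Phi(v)\eta^q$ and invoking Lemma~\ref{lem:re}, convert it into a reverse H\"older inequality for a power of $\bar g(\lambda+v)/\bar g(\lambda)$ via Sobolev, and then obtain \eqref{eq:b2} from \eqref{eq:b1} by choosing $\lambda$ so that $\bar g(\lambda)^{-1}\sim \theta_R^{1/\delta}/A_{R/2}$ and absorbing. Your remarks on admissibility (truncating $\Phi$ so that $h'$ is compactly supported) and on doing all absorptions at the level of $G$ and $\bar g$ are also on point. However, the central step — the Sobolev substitution and the exponent arithmetic — is exactly what you defer to ``whatever value makes the exponents match,'' and this is where the actual content of the lemma lives. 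The paper sets $\tilde\delta=\frac{p(q-1)\delta}{q-\delta}$, checks $p<\tilde\delta<p^*$ (the upper bound is precisely where the hypothesis $\delta<\frac{n}{n-(p-1)}$ enters, together with $p\le q$), fixes $\tilde\delta<\gamma<p^*$, and applies the \emph{$p$-Sobolev} inequality to $\omega\eta$ with $\omega=(\bar g(\lambda+v)/\bar g(\lambda))^{\delta/\tilde\delta}-1$. The term $R^p\fint|\nabla\omega|^p$ is then split by the chain rule, \eqref{eq:barg} and the Young-type inequality of Lemma~\ref{lem-H} into a zero-order piece and a piece carrying $G(R|\nabla v|)$ with weight $\bar g^{\alpha p}(\lambda+v)/G(\lambda+v)$, $\alpha=\delta/\tilde\delta$; the identity $\frac{\delta p}{\tilde\delta}-1=\frac{1-\delta}{q-1}$ is what makes this weight coincide with the one produced by Lemma~\ref{lem:re}, so that the Caccioppoli inequality closes the estimate. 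None of this bookkeeping is recoverable from your plan as written.

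Moreover, the specific substitution you do propose — engineering $w$ so that $|\nabla w|^q$ matches the Caccioppoli integrand and applying $\|w\|_{L^{q^*}}^q\le C\|\nabla w\|_{L^q}^q$ — points in the wrong direction for this statement. It would yield a reverse H\"older inequality with exponent $q/\gamma$ for $\gamma<q^*=\frac{nq}{n-q}$, not the stated $p/\gamma$ with $\gamma<p^*$ (so the exponent $\rho_1=\frac{1-p/\gamma}{\delta}$ would not come out as claimed), it degenerates when $q=n$, which the standing assumption $p\le q\le n$ permits, and the unwinding of $F(v)=\int_0^v(\text{weight})^{1/q}(\lambda+s)^{-1/q}\,ds$ into a power of $\bar g(v)/\bar g(\lambda)$ must be checked separately in the regimes $v\lesssim 1$ and $v\gtrsim 1$ where $\bar g$ has different power behavior — a verification you do not attempt. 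Your closing observation that the lower bound $G(t)\gtrsim t^p$ ``feeds Sobolev'' is correct, but it forces the $p$-Sobolev inequality applied to a quantity whose gradient is controlled in $L^p$; committing to that from the start, as the paper does, is what ties the hypothesis on $\delta$ to the conclusion.
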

\begin{proof}
    Since $p<n$, it follows that $p^*=\frac{np}{n-p}<\infty$. Define
    \begin{equation*}
        \tilde{\delta}=\frac{p(q-1)\delta}{q-\delta}.
    \end{equation*}
    It is straightforward to verify that $p<\tilde{\delta}<p^*$.  Choose $\tilde{\delta}<\gamma<p^*$.
    We define a nonnegative function
    \begin{equation*}
        \omega=\left(\frac{\bar{g}(\lambda+v)}{\bar{g}(\lambda)}\right)^{\frac{\delta}{\tilde{\delta}}}-1.
    \end{equation*}
Notice that
\begin{equation}
    \begin{split}
      \left( \frac{\bar{g}(v)}{\bar{g}(\lambda)}\right)^{\delta}&= \left( \frac{\bar{g}(v)}{\bar{g}(\lambda)}\right)^{\delta}\chi_{\{0<v<\lambda\}}+ \left( \frac{\bar{g}(v)}{\bar{g}(\lambda)}\right)^{\delta}\chi_{\{v\geq \lambda\}}\\
      &\leq \chi_{\{0<v\}}+\left( \frac{\bar{g}(v)}{\bar{g}(\lambda)}\right)^{\frac{\delta \gamma}{\tilde{\delta}}}\chi_{\{v\geq \lambda\}}\\
      &\leq \chi_{\{0<v\}}+ (\omega+1)^{\gamma}\chi_{\{v\geq 0\}}\\
      &\leq C\chi_{\{0<v\}}+C\omega^{\gamma},
    \end{split}
\end{equation}
and hence
\begin{equation}\label{eq:gv}
\fint_{B_{\frac{R}{2}}} \left(\frac{\bar{g}(v)}{\bar{g}(\lambda)}\right)^{\delta}\,dx \leq C\theta_{R}+C\fint_{B_{\frac{R}{2}}} \omega^{\gamma}\,dx
\end{equation}
 for some $C>0$.

Let $\eta\in C^{\infty}_0(B_{R})$ be a cut-off function satisfying $0\leq \eta \leq 1$, $\eta \equiv  1$ on $B_{\frac{3R}{4}}$, and $|\nabla \eta|\leq \frac{C}{R}$. Using the Sobolev inequality, we have
\begin{equation}\label{eq:gv1}
    \begin{split}
        \left(\fint_{B_{\frac{R}{2}}} \omega^{\gamma}\,dx\right)^{\frac{p}{\gamma}}&\leq C\left(\fint_{B_{\frac{3R}{4}}} (\omega\eta)^{\gamma}\,dx\right)^{\frac{p}{\gamma}}\\
         &\leq C\fint_{B_{\frac{3R}{4}}}\omega^p\,dx +CR^{p}\fint_{B_{\frac{3R}{4}}} |\nabla \omega|^p\,dx\\
    &=: J_1+J_2.
    \end{split}
\end{equation}

Next, we shall estimate $J_1$ and $J_2$ respectively.

\noindent \textit{Estimate of $J_1$.}
Since $\omega \leq \left(\frac{\bar{g}(2\lambda)}{\bar{g}(\lambda)}\right)^{\frac{\delta}{\tilde{\delta}}}\leq C$ on $\{0<v<\lambda\}$ and $\omega\leq C \left(\frac{\bar{g}(v)}{\bar{g}(\lambda)}\right)^{\frac{\delta}{\tilde{\delta}}}$ on $\{v\geq \lambda\}$, we obtain
\begin{equation}
    J_1 \leq C\theta_{R}+ C\fint_{B_{R}}\left(\frac{\bar{g}(v)}{\bar{g}(\lambda)}\right)^{\delta}\,dx.
\end{equation}

\noindent \textit{Estimate of $J_2$.}  Using \eqref{eq:barg} and Lemma~\ref{lem-H}, we claim that there exists a constant $C>0$ depending only on $p$, $q$ and $\alpha$, such that
\begin{equation}
    \begin{split}
        |\nabla W(v)|^p &\leq \left(\frac{\alpha (q-1)}{\bar{g}^\alpha (\lambda)}\right)^p \frac{\bar{g}^{\alpha p}(\lambda+v)}{(\lambda+v)^p}|\nabla v|^p\\
        &\leq \frac{C}{\bar{g}^{\alpha p}(\lambda)}\frac{\bar{g}^{\alpha p-1}(\lambda+v)}{R^p(\lambda+v)}\left[G(\lambda+v)+G(R|\nabla v|)\right]\\
        &\leq C\frac{\bar{g}^{\alpha p}(\lambda+v)}{R^p\bar{g}^{\alpha p}(\lambda)}+C\frac{G(R|\nabla v|)}{R^p\bar{g}^{\alpha p}(\lambda)}\frac{\bar{g}^{\alpha p}(\lambda+v)}{G(\lambda+v)},
    \end{split}
\end{equation}
where $W(v)=\left(\frac{\bar{g}(\lambda+v)}{\bar{g}(\lambda)}\right)^\alpha-1$ and $\lambda,\alpha>0$. Taking $\alpha=\frac{\delta}{\tilde{\delta}}$, we obtain
\begin{equation}
    \begin{split}
        J_2 &\leq C\fint_{B_{\frac{3R}{4}}}\frac{\bar{g}^{\alpha p}(\lambda+v)}{\bar{g}^{\alpha p}(\lambda)}\,dx+C\fint_{B_{\frac{3R}{4}}}\frac{G(R|\nabla v|)}{\bar{g}^{\alpha p}(\lambda)}\frac{\bar{g}^{\alpha p}(\lambda+v)}{G(\lambda+v)}\,dx\\
        &=:J_{21}+J_{22}.
    \end{split}
\end{equation}

  \textit{Estimate of $J_{21}$.} Similar to the estimates of $J_1$, we have
  \begin{equation}
      J_{21}\leq C\theta_{R}+ C\fint_{B_{R}}\left(\frac{\bar{g}(v)}{\bar{g}(\lambda)}\right)^{\delta}\,dx.
  \end{equation}

\textit{Estimate of $J_{22}$.} Assume that $1<\delta<\frac{n}{n-(p-1)}$. Define an auxiliary function
\begin{equation*}
    h(\bar{u})=\Phi(v)=1-\left(\frac{\bar{g}(\lambda)}{\bar{g}(\lambda+v)}\right)^{\frac{\delta-1}{q-1}},
\end{equation*}
where $v=\frac{\bar{u}}{R}$, and let $\eta\in C^{\infty}_0(B_{R})$ be a cut-off function satisfying $0\leq \eta \leq 1$, $\eta \equiv  1$ on $B_{\frac{3R}{4}}$, and $|\nabla \eta|\leq \frac{C}{R}$. Note that
\begin{equation*}
    h'(\bar{u})=\frac{\delta-1}{q-1}\left(\frac{\bar{g}(\lambda)}{\bar{g}(\lambda+v)}\right)^{\frac{\delta-q}{q-1}}\frac{\bar{g}(\lambda)\bar{g}'(\lambda+v)}{\bar{g}^2(\lambda+v)}>0.
\end{equation*}
Taking $\Phi(v)\eta^q$  as a test function in \eqref{eq:rebound}, we obtain from Lemma~\ref{lem:re} that
\begin{equation*}
\begin{split}
    0&\geq \int_{\Omega}g(|\nabla \bar{u}|)\frac{\nabla \bar{u}}{|\nabla \bar{u}|} \cdot \nabla(\Phi(v)\eta^q)\,dx\\
    & \geq \frac{C}{R}\int_{\Omega}\left(\frac{\bar{g}(\lambda)}{\bar{g}(\lambda+v)}\right)^{\frac{\delta-1}{q-1}}\frac{G(|\nabla \bar{u}|)}{\lambda+v}\eta^q\,dx-\frac{C}{R}\int_{\Omega}\bar{g}(\lambda)\left(\frac{\bar{g}(\lambda+v)}{\bar{g}(\lambda)}\right)^{\delta}\,dx.
\end{split}
\end{equation*}
Noting that $\frac{\delta p}{\tilde{\delta}} -1= \frac{1-\delta}{q-1}$, we deduce from above that
\begin{equation*}
    \begin{split}    J_{22}&=C\fint_{B_{\frac{3R}{4}}}\frac{G(|\nabla \bar{u}|)}{\bar{g}^{\frac{\delta p}{\tilde{\delta}} }(\lambda)}\frac{\bar{g}^{\frac{\delta p}{\tilde{\delta}} }(\lambda+v)}{G(\lambda+v)}\,dx\\
        &\leq C \fint_{B_{R}}\frac{G(|\nabla \bar{u}|)}{\bar{g}^{\frac{\delta p}{\tilde{\delta}} }(\lambda)}\frac{\bar{g}^{\frac{\delta p}{\tilde{\delta}} }(\lambda+v)}{G(\lambda+v)}\eta^q\,dx \\
        &\leq C\fint_{B_R}\left(\frac{\bar{g}(\lambda+v)}{\bar{g}(\lambda)}\right)^{\delta}\,dx\leq C\theta_{R}+ C\fint_{B_{R}}\left(\frac{\bar{g}(v)}{\bar{g}(\lambda)}\right)^{\delta}\,dx.
    \end{split}
\end{equation*}

In addition, it is obvious that $\theta_R\leq \theta^{\frac{p}{\gamma}}_{R}$.
Therefore, we conclude from \eqref{eq:gv} and \eqref{eq:gv1} that
\begin{equation*}
   \left( \fint_{B_{\frac{R}{2}}} \left(\frac{\bar{g}(v)}{\bar{g}(\lambda)}\right)^{\delta}\,dx\right)^{\frac{p}{\gamma}}\leq  C_0\theta^{\frac{p}{\gamma}}_{R}+ C_0\fint_{B_{R}}\left(\frac{\bar{g}(v)}{\bar{g}(\lambda)}\right)^{\delta}\,dx,
\end{equation*}
which gives \eqref{eq:b1}.

Let us next deduce \eqref{eq:b2} from \eqref{eq:b1}. We set
\begin{equation*}
    A_R=\left(\fint_{B_R} \bar{g}^{\delta}(v)\,dx\right)^{\frac{1}{\delta}}
\end{equation*}
and $M=1+C_0$. If we choose $\lambda$ so that
\begin{equation*}
    \frac{1}{\bar{g}(\lambda)}=\frac{M^{\frac{\gamma}{\delta p}}\theta^{\frac{1}{\delta}}_R}{A_{\frac{R}{2}}},
\end{equation*}
then it follows from \eqref{eq:b1} that
\begin{equation*}
    \begin{split}
        \theta^{\frac{p}{\gamma}}_R&=\left(\frac{A_{\frac{R}{2}}}{\bar{g}(\lambda)}\right)^{\frac{\delta p}{\gamma}}-C_0 \theta^{\frac{p}{\gamma}}_{R}\\
        &\leq \left(\fint_{B_{\frac{R}{2}}}\left(\frac{\bar{g}(v)}{\bar{g}(\lambda)}\right)^\delta\,dx\right)^{\frac{p}{\gamma}}-C_0 \theta^{\frac{p}{\gamma}}_{R}\\
        &\leq C_0 \left(\frac{A_R}{\bar{g}(\lambda)}\right)^{\delta}\leq C_0M^{\frac{\gamma}{p}}\theta_R \left(\frac{A_R}{A_{\frac{R}{2}}}\right)^\delta.
    \end{split}
\end{equation*}
This implies that
\begin{equation*}
    A_{\frac{R}{2}} \leq C^{\frac{1}{\delta}}_1\theta^{\rho_1}_{R}A_R,
\end{equation*}
where $\rho_1=\frac{1-\frac{p}{\gamma}}{\delta}>0$. Hence, the desired estimate \eqref{eq:b2} follows.
\end{proof}
\begin{lemma}\label{lem:bound2}
    Let $u$ be a local renormalized solution of \eqref{eq:main} in $\Omega$. Then $u$ is locally essentially bounded below.
\end{lemma}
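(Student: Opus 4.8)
The plan is to iterate the reverse-H\"older-type estimate \eqref{eq:b2} from Lemma~\ref{lem:bound} on a sequence of shrinking concentric balls in the spirit of De Giorgi, exploiting that the density factor $\theta_R$ measures the relative size of the set where $v=\bar u/R>0$. First I would fix a ball $B_{R_0}=B_{R_0}(x_0)\subset\subset\Omega$ and, for $j\geq 0$, set $R_j=2^{-j}R_0$ and $A_j=\left(\fint_{B_{R_j}}\bar g^{\delta}(v_j)\,dx\right)^{1/\delta}$ with $v_j=\bar u/R_j$ (so $v_j=2^j v_0$ as functions, which must be tracked carefully using the scaling bounds for $\bar g$ in Lemma~\ref{lem:Gg}). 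Applying \eqref{eq:b2} on $B_{R_j}$ gives $A_{j+1}\leq C\theta_{R_j}^{\rho_1}A_j$. The point is that the densities $\theta_{R_j}=|B_{R_j}\cap\{v>0\}|/|B_{R_j}|$ need not be small a priori, so a naive iteration does not converge; instead one plays off the two possible scenarios.

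The dichotomy I would use is: either $\bar u$ is already locally bounded on $B_{R_0/2}$ (nothing to prove), or at the Lebesgue point $x_0$ one shows $\bar u$ cannot blow up. Concretely, suppose for contradiction $\bar u$ is essentially unbounded near $x_0$; I would pick $x_0$ to be a point where the essential limsup of $\bar u$ is $+\infty$ and a point of density, and derive that $\theta_{R_j}\to 0$ is impossible while simultaneously $A_j$ must grow — or, more cleanly, renormalize at each scale by choosing $\lambda=\lambda_j$ as in the proof of Lemma~\ref{lem:bound} so that $A_{R_j/2}=M^{\gamma/(\delta p)}\theta_{R_j}^{1/\delta}\bar g(\lambda_j)$, turning \eqref{eq:b1} into a genuine geometric decay $\bar g^{-1}$-bound on the oscillation. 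The cleanest route is probably to combine \eqref{eq:b2} with the local integrability from \eqref{eq:renormalizedpro}: since $|u|^{q-1}\in L^m_{\rm loc}$ for $m$ slightly above $n/(n-q)>1$, the quantity $A_{R_0}$ is finite, and a Caccioppoli-plus-Sobolev iteration of the De Giorgi type started from that finite bound yields $\esssup_{B_{R_0/2}}\bar u\leq C(A_{R_0}+R_0)$, where the $\delta$-power on the right is absorbed because $\rho_1>0$ makes the product $\prod_j\theta_{R_j}^{\rho_1}$ summable in the exponent once the $\theta_{R_j}$ are eventually bounded by a fixed constant less than $1$.

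More precisely, the step that makes it work is the following standard fact: if a nonnegative sequence satisfies $Y_{j+1}\leq C\,b^j\,Y_j^{1+\sigma}$ for some $b<1$ and $\sigma>0$, then $Y_j\to 0$ provided $Y_0$ is small enough. To put Lemma~\ref{lem:bound}'s estimate into this form I would, at level $j$, track the excess $w_j=(\bar u-k_j)_+$ over an increasing sequence of levels $k_j\uparrow k$ and balls $R_j\downarrow R_\infty=R_0/2$; the density $\theta$ associated with $w_j$ on $B_{R_j}$ is controlled by Chebyshev through $\fint_{B_{R_j}}\bar g^{\delta}(w_{j-1}/R_j)\,dx$ divided by $\bar g^{\delta}((k_j-k_{j-1})/R_j)$, which supplies the $b^j$ gain because $k_j-k_{j-1}\sim 2^{-j}$ and the $\bar g$-scaling from Lemma~\ref{lem:Gg} converts that into a fixed power of $2^{-j}$. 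Feeding this into \eqref{eq:b2} (equivalently \eqref{eq:b1}) yields exactly a fast geometric recursion for $Y_j:=\fint_{B_{R_j}}\bar g^{\delta}(w_j/R_j)\,dx$, whose convergence to $0$ forces $(\bar u-k)_+=0$ a.e.\ on $B_{R_0/2}$ once $k$ is chosen large (depending on $A_{R_0}$), i.e.\ $\bar u\leq k$ a.e., which is the claim.

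The main obstacle I expect is bookkeeping the interaction between the Orlicz growth function $\bar g$ and the scaling $v=\bar u/R$: every time $R$ is halved the natural normalizing factor $\bar g(\cdot)$ changes non-homogeneously, so one must repeatedly invoke the two-sided bounds $\lambda^{p-1}\bar g(t)\le \bar g(\lambda t)\le \lambda^{q-1}\bar g(t)$ (and the corresponding ones for $\bar g^{-1}$) to ensure that the gains from the level increments genuinely dominate the losses from rescaling — this is where the hypothesis $1<\delta<\frac{n}{n-(p-1)}$ and the particular choice $\tilde\delta=\frac{p(q-1)\delta}{q-\delta}$, $\tilde\delta<\gamma<p^*$ in Lemma~\ref{lem:bound} are used, since they are exactly what makes $\rho_1>0$ and keeps all exponents in the admissible range. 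A secondary technical point is that $\bar u$ only satisfies the one-sided distributional inequality \eqref{eq:rebound} without a priori integrability, so the De~Giorgi truncation levels must stay in the range where $T_k(\bar u)\in W^{1,q}_{\rm loc}$ is already known; this is harmless since we only ever truncate $\bar u=u_-$ from above and $\min\{u,k\}\in W^{1,q}_{\rm loc}$ holds for all $k$, but it needs to be stated.
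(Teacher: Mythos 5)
Your overall strategy --- iterate Lemma~\ref{lem:bound} on shrinking balls, using Chebyshev to convert the gain in the level into smallness of the density $\theta$ --- is exactly the engine of the paper's proof, but you run it as a classical De~Giorgi iteration (levels $k_j\uparrow k$ prescribed geometrically in advance, fast geometric convergence $Y_{j+1}\le Cb^jY_j^{1+\sigma}$ forcing $Y_j\to0$), whereas the paper runs it as a Kilpel\"ainen--Mal\'y iteration: the levels $l_j$ are chosen \emph{adaptively} by $l_{j+1}=l_j+R\,\bar g^{-1}\bigl(\tfrac1\epsilon(\fint_{B^j}\bar g^\delta((\bar u-l_j)_+/R)\,dx)^{1/\delta}\bigr)$, so that Chebyshev gives $\theta_j\le\epsilon^\delta$ exactly, and then \eqref{eq:b2} yields geometric decay of the increments $l_{j+1}-l_j$, hence convergence of $l_j$ to a finite $l$ bounded by $2l_1$, with the pointwise conclusion $\bar u(x_0)\le l$ read off at a Lebesgue point. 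The adaptive choice buys two things your version has to fight for: (i) no superlinear recursion and hence no smallness condition on $Y_0$ to arrange (the finiteness of $A_{R_0}$, guaranteed by $\delta<n/(n-p+1)<n/(n-q)$ and \eqref{eq:renormalizedpro}, is enough), and (ii) no interaction between the level increments and the rescaling of $\bar g$, since the paper keeps the fixed normalization $(\bar u-l_j)_+/R$ throughout while only the balls shrink.

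The one concrete point you must repair is the geometry of the balls. Lemma~\ref{lem:bound} is proved only for the pair $B_R\to B_{R/2}$ with a cutoff satisfying $|\nabla\eta|\le C/R$; you cannot apply it to a sequence of radii decreasing to $R_0/2$, because the requisite cutoffs would have gradients of order $(R_j-R_{j+1})^{-1}\gg R_j^{-1}$ and the constant in \eqref{eq:b2} would degenerate. Either you reprove Lemma~\ref{lem:bound} with an adjustable ratio (routine but not free), or you do what the paper does: let the balls $B^j=B_{2^{-j-1}R}(x_0)$ shrink to a Lebesgue point $x_0$ of $\bar g^\delta(u)$ and conclude $\bar u(x_0)\le l\le 2l_1$ there, which gives local essential boundedness since the bound depends only on $\fint_{B_{R/2}(x_0)}\bar g^\delta(\bar u/R)\,dx$ and a.e.\ point is such a Lebesgue point. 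With that adjustment your argument goes through; your remarks on the $\bar g$-scaling via Lemma~\ref{lem:Gg}, on the role of $\rho_1>0$, and on the admissibility of the truncations of $\bar u=u_-$ are all on target.
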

\begin{proof}
     Let
   $x_0\in\Omega$ be a Lebesgue point of $\bar{g}^{\delta}(u)$ with $1<\delta<\frac{n}{n-(p-1)}$,  and let $R<\dist(x_0,\partial \Omega)$.
 Define $R_j=2^{-j-1}R$ and $B^j=B_{R_j}=B(x_0, R_j)$ for $j=0,1,\cdots$.
For a parameter $\epsilon\in (0,1)$ to be determined later, set $l_0=0$ and
recursively define
 \begin{equation*}
        l_{j+1}=l_j+ R\bar{g}^{-1}\left(\frac{1}{\epsilon}\left(\fint_{B^j}\bar{g}^\delta\left(\frac{(\bar{u}-l_j)_+}{R}\right)\,dx\right)^{\frac{1}{\delta}}\right)
    \end{equation*}
    for $j=0,1,\cdots,$ and $\theta_j=\frac{|B^{j-1}\cap \{u>l_j\}|}{|B^{j-1}|}$.
    Note that we have
    \begin{equation}\label{eq:b3}
        \begin{split}
            \theta_j&\leq \frac{1}{|B^{j-1}|}\int\limits_{B^{j-1}\cap\{u>l_j\}}\left(\frac{\bar{g}((u-l_{j-1})_+/R)}{\bar{g}((l_j-l_{j-1})_+/R)}\right)^\delta\,dx\\
            &\leq \frac{1}{|B^{j-1}|}\int\limits_{B^{j-1}}\left(\frac{\bar{g}((u-l_{j-1})_+/R)}{\bar{g}((l_j-l_{j-1})_+/R)}\right)^\delta\,dx=\epsilon^\delta.
        \end{split}
    \end{equation}
Using Lemma~\ref{lem:bound} to $(\bar{u}-l_j)_+$ in $B^{j-1}$ for $j\geq 1$ and using the inequality above, we obtain
\begin{equation}\label{eq-sec3-barg}
    \bar{g}\left((l_{j+1}-l_j)/R\right)\leq C\epsilon^{\rho_1 \delta}\bar{g}\left((l_j-l_{j-1})/R\right),
\end{equation}
where we used the fact that
\[\bar{g}((u-l_j)_+/R)\leq \bar{g}((u-l_{j-1})_+/R).\]
Taking $\bar{g}^{-1}$ in the both sides of \eqref{eq-sec3-barg}, it yields from Lemma~\ref{lem:Gg} that
\begin{equation*}
    l_{j+1}-l_j \leq C\epsilon^{\frac{\rho_1 \delta}{q-1}}(l_j-l_{j-1}),
\end{equation*}
which implies that  for $k\geq 2$,
\begin{equation*}
    \begin{split}
        l_k&=l_1+\sum^{k-1}\limits_{j=1}(l_{j+1}-l_j)\\
        &\leq l_1 +C\epsilon^{\frac{\rho_1 \delta}{q-1}}l_{k-1}.
    \end{split}
\end{equation*}
Choosing  $\epsilon\in (0,1)$ sufficiently small so that $C\epsilon^{\frac{\rho_1 \delta}{q-1}} <\frac{1}{2}$, we conclude that
\[l_k \leq 2l_1=\bar{g}^{-1}\left(\frac{1}{\epsilon}\left(\fint_{B_{R/2}}\bar{g}^\delta\left(\frac{\bar{u}}{R}\right)\,dx\right)^{\frac{1}{\delta}}\right).\]
It is straightforward to verify that $l_j$ is increasing. Moreover, since $\delta<\frac{n}{n-p+1}<\frac{n}{n-p}$, we deduced from by the assumed summability of $u$  that $l_j$ is  also bounded. Suppose that
$\lim\limits_{j\to \infty}l_j=l$. Therefore, we find
\begin{equation}
\begin{split}
   \bar{g}^\delta\left(\frac{(\bar{u}(x_0)-l)_+}{R}\right)&\leq \lim_{j\to\infty}\fint_{B^j}\bar{g}^\delta\left(\frac{(\bar{u}-l_j)_+}{R}\right)\,dx \\
&=\lim_{j\to\infty}\epsilon^\delta \bar{g}^{\delta}\left(\frac{l_{j+1}-l_j}{R}\right)=0,
\end{split}
\end{equation}
which implies
\begin{equation*}
    u_{-}(x_0)=\bar{u}(x_0)\leq l.
\end{equation*}
Hence, $u$ is locally essentially bounded below.

\end{proof}

Now, we are ready to prove that for nonnegative Radon measure $\mu$ each local renormalized solution has a $q$-superharmonic representative.

\begin{proof}[Proof of Theorem ~\ref{th:renor}.]
    Let $\phi\in C^{\infty}_{0}(\Omega)$ be a non-negative function. For $\epsilon>0$ and $k>0$, define
    \begin{equation}
        h_{k,\epsilon}(t)=\frac{1}{\epsilon}\min\{(k+\epsilon-t)_{+},\epsilon\}.
    \end{equation}
    Since $h'_{k,\epsilon}(t)\leq 0$, it follows that
    \begin{equation*}
        \int_{\Omega}\left(|\nabla u|^{p}+|\nabla u|^q\right)h'_{k,\epsilon}(u)\phi\,dx\leq 0.
    \end{equation*}
    Moreover, the non-negativity of $\mu$ and $\phi$ implies
    \begin{equation*}
        \int_{\Omega}h_{k,\epsilon}(u)\phi\,d\mu_0+h_{k,\epsilon}(+\infty)\int_{\Omega}\phi\,d\mu_s\geq 0.
    \end{equation*}
    Letting $\epsilon\to 0$ and  applying the  dominated convergence theorem, we conclude that
    \begin{equation*}
        \int_{\Omega}\left(|\nabla u_k|^{p}+|\nabla u_k|^q\right)\cdot
    \nabla \phi\,dx \geq 0,
    \end{equation*}
    where $u_k=\min\{u,k\}$.

By Lemma~\ref{lem:bound2}, $u$ is locally bounded from below, which ensures that $u_k\in W_{\rm loc}^{1,q}(\Omega)$ is an ordinary  supersolution.  Consequently, each $u_k$ admits a $q$-superharmonic representative $\tilde{u}_k$. Finally, since $\tilde{u}_k$ forms an increasing sequence of $q$-superharmonic functions, their limit
\[\tilde{u}=\lim\limits_{k\to\infty} \tilde{u}_{k}\]
is also $q$-superharmonic. Thus, $\tilde{u}$ serves as the desired representative of $u$.
\end{proof}

\begin{remark}
     We remark here that the $q$-superharmonic representative
     $\tilde{u}$ obtained from Theorem~\ref{th:renor}  satisfies  that
\begin{equation}\label{eq-sec3-SR}
\begin{split}
\int_{\Omega}h(\tilde{u})\phi\,d\mu&=\int_{\Omega}h(\tilde{u})\phi\,d\mu_0+\int_{\Omega}h(\tilde{u})\phi\,d\mu_s\\
&=\int_{\Omega}h(\tilde{u})\phi\,d\mu_0+h(+\infty)\int_{\Omega}\phi\,d\mu_s
\end{split}
\end{equation}
 for all $\phi\in C^{\infty}_{0}(\Omega)$ and $h\in W^{1,\infty}(\mathbb{R})$ such that $h'$ has a compact support.
Indeed, since $\mu_0$ does not charge sets of $q$-capacity zero,  the integration against $\mu_0$ is independent of the chosen $q$-quasicontinuous representative of $u$. Additionally, by Lemma~\ref{lem-sec2}, we know  that $u=+\infty$ almost everywhere with respect to $\mu_s$. Combining these results, we obtain \eqref{eq-sec3-SR}. Thus, we can write \eqref{eq-sec3re}  for short as
    \begin{equation}\label{eq:Sec3re}
        \int_{\Omega}\left(|\nabla \tilde{u}|^{p-2}\nabla \tilde{u} +|\nabla \tilde{u}|^{q-2}\nabla\tilde{u} \right)\cdot\nabla (h(\tilde{u})\phi)\,dx =\int_{\Omega}h(\tilde{u})\phi\,d\mu.
    \end{equation}
\end{remark}

\section{Superharmonic functions are locally renormalized solutions}
\label{sec4}

\begin{lemma}\label{lem:sec4}
    Let $\mu$ be a nonnegative Radon measure supported in $B(0, R)$. Then there is a $q$-superharmonic function $\omega$ which is finite a.e. solving
   \begin{align}\label{eq:sec4}
   \left\{
  \begin{array}{ccc}
- \DIV~\left(g(|\nabla \omega|)\frac{\nabla \omega}{|\nabla \omega|}\right)= \mu \quad &\mbox{in}& B(0,4R),\\
\omega = 0\quad  &\mbox{on}& \partial B(0,4R),
  \end{array}
  \right.
\end{align}
such that for all $0<r<\min{\{R,1\}}$, there is a positive constant $L<\infty$ such that
\[\omega\in \mathcal{S}_{\mu,r,L}\left(B(0,4R)\right)\]
and
\begin{equation*}
    \int_{B(0,4R)}\left(|\nabla (\min\{\omega,2\lambda\}-\lambda)_+|^p+|\nabla (\min\{\omega,2\lambda\}-\lambda)_+|^q\right)\,dx\leq \lambda \mu\left(\left\{W_{\mu,r}>\frac{\lambda}{L}\right\}\cap B(0,R)\right)
\end{equation*}
for all $\lambda>L$.
\end{lemma}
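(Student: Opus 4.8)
The plan is to construct $\omega$ by approximation, read its membership in $\mathcal{S}_{\mu,r,L}$ off the Wolff potential estimate, and obtain the energy bound by testing the (regularised) equations with the truncation $(\min\{\omega,2\lambda\}-\lambda)_+$ and passing to the limit.

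\emph{Step 1: existence of $\omega$.} Mollify $\mu$ to get nonnegative $\mu_j\in C^\infty_0(B(0,R'))$ with $R<R'<2R$, $\mu_j\rightharpoonup\mu$ weakly-$*$, and $\mu_j(B(0,4R))\le\mu(\mathbb{R}^n)$ for all $j$, and let $\omega_j\in W^{1,q}_0(B(0,4R))$ be the unique weak solution of \eqref{eq:sec4} with datum $\mu_j$; comparison with $0$ gives $\omega_j\ge0$. By the standard compactness for measure-data problems of $(p,q)$-growth (as in \cite{CGZ-24,Chlebicka23mea}), a subsequence of $\omega_j$ converges a.e. and in $W^{1,s}_{\rm loc}$ for a suitable $s$ to a nonnegative function $\omega$ which is $q$-superharmonic, finite a.e. by Proposition~\ref{prop-sec2-inte}, has Riesz measure $\mu$, solves \eqref{eq:sec4} with vanishing boundary values in the sense that $T_k(\omega)\in W^{1,q}_0(B(0,4R))$ for every $k>0$.

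\emph{Step 2: the class $\mathcal{S}_{\mu,r,L}$.} Fix $0<r<\min\{R,1\}$. Since $\dist(B(0,2R),\partial B(0,4R))=2R$, so that $r<\min\{1,\dist(B(0,2R),B(0,4R))/2\}$, the lemma producing $u\in\mathcal{S}_{\mu,r,L}$ applies with $\Omega'=B(0,2R)\Subset B(0,4R)=\Omega$ and gives a constant $L_1$ with $\omega\in\mathcal{S}_{\mu,r,L_1}(B(0,2R))$. On $\overline{B(0,4R)}\setminus B(0,2R)$ we have $W_{\mu,r}\equiv0$ because $\supp\mu\subset\overline{B(0,R)}$ and $r<R$, while $\omega$ is $q$-harmonic in $\{R<|x|<4R\}$ and vanishes continuously on $\{|x|=4R\}$, hence is bounded on that closed annulus, say by $L_2$; together with $\omega\ge0$ this yields both defining inequalities of $\mathcal{S}_{\mu,r,L_2}$ there. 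Thus $\omega\in\mathcal{S}_{\mu,r,L}(B(0,4R))$ with $L=\max\{L_1,L_2\}$; in particular the upper bound $\omega\le L+c_2W_{\mu,r}$ holds throughout $B(0,4R)$.

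\emph{Step 3: the energy estimate.} Fix $\lambda>0$ and set $\psi_j=(\min\{\omega_j,2\lambda\}-\lambda)_+\in W^{1,q}_0(B(0,4R))\cap L^\infty$, so $0\le\psi_j\le\lambda$ and $\nabla\psi_j=\nabla\omega_j\,\chi_{\{\lambda<\omega_j<2\lambda\}}$. Testing the equation for $\omega_j$ with $\psi_j$ and using $g(t)\,t=t^p+t^q$,
\[
\int_{B(0,4R)}\big(|\nabla\psi_j|^p+|\nabla\psi_j|^q\big)\,dx=\int_{B(0,4R)}\psi_j\,d\mu_j\le\lambda\,\mu_j(\{\omega_j>\lambda\}).
\]
Since $\psi_j\rightharpoonup\psi:=(\min\{\omega,2\lambda\}-\lambda)_+$ weakly in $W^{1,q}_0$ and in $W^{1,p}_0$, weak lower semicontinuity of $w\mapsto\int|\nabla w|^p+|\nabla w|^q$ bounds the left side from below by $\int(|\nabla\psi|^p+|\nabla\psi|^q)\,dx$ in the limit. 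For the right side, the Wolff estimate \eqref{eq-sec2-wolff} for $\omega_j$ together with the uniform integrability of $\omega_j$ gives $\omega_j\le c(W_{\mu_j,r}+L_0)$ on $\overline{B(0,R')}$ uniformly in $j$; comparing $W_{\mu_j,r}$ with $W_{\mu,r}$ via $\mu_j(B(x,s))\le\mu(B(x,s+\eps_j))$ and the doubling of $g^{-1}$ from Lemma~\ref{lem:Gg}, one gets $\{\omega_j>\lambda\}\subset\{W_{\mu,r}\ge\lambda/L\}\cap\overline{B(0,R)}$ once $\lambda$ is large and $L$ has been enlarged; this set being compact and $\mu_j\rightharpoonup\mu$, we obtain $\limsup_j\mu_j(\{\omega_j>\lambda\})\le\mu(\{W_{\mu,r}>\lambda/L\}\cap B(0,R))$ after a last enlargement of $L$. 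Combining the two limits yields the claimed inequality for all $\lambda>L$.

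\emph{Main obstacle.} The delicate point is precisely the passage to the limit on the right-hand side: one must identify $\lim_j\int\psi_j\,d\mu_j$, equivalently control $\limsup_j\mu_j(\{\omega_j>\lambda\})$, in terms of $\mu$ and $W_{\mu,r}$. This couples the stability of SOLA (a.e.\ convergence $\omega_j\to\omega$ and uniform $L^s$-bounds) with the weak-$*$ convergence $\mu_j\rightharpoonup\mu$ and with a quantitative comparison of the truncated Wolff potentials $W_{\mu_j,r}$ and $W_{\mu,r}$; the behaviour of the superlevel set $\{\omega_j>\lambda\}$ under these convergences is where care is needed. One buys the required room by enlarging the constant $L$ and, if necessary, first proving the estimate for $\lambda$ outside a countable exceptional set and then extending it by the monotonicity in $\lambda$ of both sides.
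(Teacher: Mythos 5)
Your Steps 1 and 2 are in the spirit of the paper (the paper invokes the existence of a renormalized solution from \cite{Chlebicka23mea} together with Theorem~\ref{th:renor} rather than redoing the approximation, and handles the annulus $B(0,4R)\setminus B(0,2R)$ by the maximum principle exactly as you do). The real divergence, and the problem, is Step 3. The paper never passes to the limit in the tested equation: since $\omega$ is already known to be a \emph{renormalized} solution satisfying \eqref{eq:Sec3re}, the function $h(\omega)=(\min\{\omega,2\lambda\}-\lambda)_+$ is an admissible renormalized test function (Lipschitz $h$ with compactly supported $h'$, and $h(\omega)\in W^{1,q}_0(B(0,4R))$ because $\omega$ vanishes on the boundary), so one gets the \emph{exact identity}
\begin{equation*}
\int_{B(0,4R)}\bigl(|\nabla(\min\{\omega,2\lambda\}-\lambda)_+|^p+|\nabla(\min\{\omega,2\lambda\}-\lambda)_+|^q\bigr)\,dx=\int_{B(0,R)}h(\omega)\,d\mu ,
\end{equation*}
and the right-hand side is bounded by $\lambda\,\mu(\{\omega>\lambda\})\le\lambda\,\mu(\{W_{\mu,r}>\lambda/(2c)\}\cap B(0,R))$ using only the Wolff lower bound $\{\omega>L_0+ck\}\subset\{W_{\mu,r}>k\}$. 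No weak-$*$ limit of measures on $j$-dependent sets is needed.

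Your version leaves a genuine gap at the point you yourself flag: the claim $\limsup_j\mu_j(\{\omega_j>\lambda\})\le\mu(\{W_{\mu,r}>\lambda/L\}\cap B(0,R))$. The inclusion you can realistically prove places $\{\omega_j>\lambda\}$ inside a superlevel set of $W_{\mu,r}$, but $W_{\mu,r}$ is only lower semicontinuous, so $\{W_{\mu,r}>t\}$ is \emph{open} and $\{W_{\mu,r}\ge t\}$ need not be closed; weak-$*$ convergence gives $\limsup_j\mu_j(K)\le\mu(K)$ only for compact $K$ (for open sets the inequality goes the other way), so "this set being compact" is unjustified and the conclusion does not follow as written. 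The suggested repair via "monotonicity in $\lambda$ of both sides" also fails: the left-hand side $\int(|\nabla(\min\{\omega,2\lambda\}-\lambda)_+|^p+\dots)\,dx$ is an energy over the slab $\{\lambda<\omega<2\lambda\}$ and is not monotone in $\lambda$. The clean way out is the paper's: first establish (as in Section~\ref{sec3}) that the limit $\omega$ is a renormalized solution, and then derive the energy estimate from the renormalized identity itself rather than from the approximating problems.
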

\begin{proof}
It follows from \cite[Theorem 1.8]{Chlebicka23mea}
that there exists a renormalized solution $v$ to the equation \eqref{eq:sec4} in the formulation vanishing on $\partial B(0,4R)$ in the $W^{1,q}_0$ sense. Moreover, by Theorem~\ref{th:renor}, the renormalized solution has a $q$-superharmonic representative $\omega$ satisfying~\eqref{eq:Sec3re} with $v=\omega$ almost everywhere in  $B(0,4R)$ and $\omega$ is finite almost everywhere in $B(0,4R)$.

Since $\omega$ is a $q$-superharmonic function, it is nonnegative due to the minimum principle. And there exists $L\geq L_0$ such that
\[\omega\in \mathcal{S}_{\mu,r,L}(B(0,2R)).\]
 By the Wolff potential estimate, $\omega$ is locally bounded outside the support of $\mu$, and therefore, $\omega$ is locally in $W^{1,q}$ there. In particular, $\omega$ is $q$-harmonic in $B(0,4R)\setminus B(0,R)$. Using the maximum principle, we obtain
 \[ \sup_{B(0,4R)\setminus B(0,2R)} \omega \leq L_0.\]
Thus, for any $L\geq L_0$, it follows that
\[\omega \in  \mathcal{S}_{\mu,r,L}(B(0,4R)).\]
The potential estimate also gives the inclusion
\[\{\omega>L_0+ck\}\subset \{W_{\mu,r}>k\}\quad \text{for all} ~k>0.\]
Fix $k=\frac{\lambda}{c}-L_0$, where $\lambda>0$, $c>1$. For $\lambda>2cL_0$, we have $k>\frac{\lambda}{2c}$. Therefore, for all $\lambda>2cL_0$, it follows that
\[\{\omega>\lambda\}\subset \left\{W_{\mu,r}>\frac{\lambda}{2c}\right\}.\]

 Next, we study the renormalized equation for  $\omega$ with
 \[h(\omega)=(\min\{\omega,2\lambda\}-\lambda)_+,\quad \text{where} \quad \lambda>2cL_0>0,\]
 which is clearly admissible, as $h$ is Lipschitz continuous and $h'$ has a compact support. Moreover, since $\omega$ vanishes continuously on the boundary of $B(0,4R)$, it follows that $h(\omega)$ has a compact support in $B(0,4R)$ and  $h(\omega)\in W^{1,q}_0(B(0,4R))$. This leads to the following inequality:
 \begin{equation*}
     \begin{split}
         &\lambda\mu\left(\left\{W_{\mu,r}>\frac{\lambda}{2c}\right\}\cap B(0,R)\right)\\
         &\quad \geq \int_{B(0,R)}h(\omega)\,d\mu =\int_{B(0,4R)}g(|\nabla \omega|)\frac{\nabla \omega}{|\nabla \omega|}\cdot\nabla h(\omega)\,dx\\
         &\quad=\int_{\{\lambda<\omega<2\lambda\}\cap B(0,4R)}(|\nabla \omega|^p+|\nabla \omega|^q)\,dx\\
         &\quad =\int_{B(0,4R)}(|\nabla (\min\{\omega,2\lambda\}-\lambda)_+|^p+|\nabla (\min\{\omega,2\lambda\}-\lambda)_+|^q)\,dx,
     \end{split}
 \end{equation*}
and the result follows with $L:=2c\max\{L_0,1\}$.
\end{proof}

\begin{theorem}\label{Th32}
For finite almost everywhere function $u$ and $v$. Suppose that $u$ is a $q$-superharmonic function to \eqref{eq:main}. Suppose further that $v$ is $q$-superharmonic and that for all $\Omega'\subset \Omega$ and for all small $r>0$, there is $L<\infty$ such that
    \[ u,v \in \mathcal{S}_{\mu,r,L}(\Omega').\]
Let $h:\mathbb{R}\times \mathbb{R}\to \mathbb{R}$ be Lipschitz and let $\nabla h$ have a compact support. Then
    \begin{equation*}
        \int_{\Omega}h(u,v)\phi\,d\mu= \int_{\Omega}\left(
        |\nabla u|^{p-2}\nabla u+|\nabla u|^{q-2}\nabla u\right)\cdot\nabla\left(h(u,v)\phi\right)\,dx
    \end{equation*}
    for all $\phi \in C^{\infty}_0(\Omega)$.
\end{theorem}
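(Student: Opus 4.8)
The plan is to prove the identity first for the bounded truncations $u_j:=\min\{u,j\}$ and then let $j\to\infty$; the genuine difficulty will be the convergence of the measure term, where the possible singular part of $\mu$ meets the fact that $h(u,v)\phi$ is only quasicontinuous. By linearity I may assume $\phi\ge 0$; write $K:=\supp\phi$, a compact subset of $\Omega$, and note that on $K$ the $q$-superharmonic functions $u,v$ are bounded below, say $u,v\ge-\kappa$. Since $\nabla h$ has compact support there is $M>\kappa$ with $\nabla h\equiv 0$ off $[-M,M]^2$; as the complement of $[-M,M]^2$ is connected, $h$ equals a constant $c$ there, so $h(u,v)=c$ wherever $\max\{u,v\}>M$, and $h(u,v)$ can vary only on $\{u\le M\}\cap\{v\le M\}$. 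For $j>M$ the truncation $u_j$ has a bounded $q$-superharmonic representative that is a $q$-supersolution, so $u_j\in W^{1,q}_{\rm loc}(\Omega)$ and $-\DIV\big(g(|\nabla u_j|)\tfrac{\nabla u_j}{|\nabla u_j|}\big)=\mu_j$ for a nonnegative Radon measure $\mu_j$; because $p\le q$ the flux lies in $L^{q'}_{\rm loc}$, hence $\mu_j\in W^{-1,q'}_{\rm loc}(\Omega)$ is $q$-diffuse and a standard density argument with quasicontinuous representatives extends the weak formulation of the $u_j$-equation to all compactly supported $\psi\in W^{1,q}_0(\Omega)\cap L^\infty(\Omega)$. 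I would then insert $\psi:=h(u_j,v)\phi$, which is admissible since it is bounded, quasicontinuous, compactly supported, and $\nabla(h(u_j,v))=\partial_1h(u_j,v)\nabla u_j+\partial_2h(u_j,v)\nabla(\min\{v,M\})\in L^q_{\rm loc}$ (the second derivative being supported where $v\le M$, and $\min\{v,M\}\in W^{1,q}_{\rm loc}$). A short check gives $h(u_j,v)=h(u,v)$ for every $j>M$ — on $\{u\le M\}$ because $u_j=u$ there, on $\{u>M\}$ because then $u_j>M$ too, so both sides equal $c$ — whence, for all $j>M$,
\begin{equation*}
\int_\Omega g(|\nabla u_j|)\frac{\nabla u_j}{|\nabla u_j|}\cdot\nabla\big(h(u,v)\phi\big)\,dx=\int_\Omega h(u,v)\phi\,d\mu_j.
\end{equation*}

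To pass to the limit I would split both sides over $\{u\le M\}$ and $\{u>M\}$. On $\{u\le M\}$ one has $\nabla u_j=\nabla(\min\{u,M\})$ for $j>M$, so the left-hand integrand there is independent of $j$ and integrable by H\"older's inequality; on $\{u>M\}$ we have $\nabla(h(u,v)\phi)=c\,\nabla\phi$, which is bounded, $g(|\nabla u|)\in L^1_{\rm loc}$ by Proposition~\ref{prop-sec2-inte}, and $\nabla u_j\to\nabla u$ a.e.\ (as $u<\infty$ a.e.), so dominated convergence applies. Hence the left-hand side converges to $\int_\Omega\big(|\nabla u|^{p-2}\nabla u+|\nabla u|^{q-2}\nabla u\big)\cdot\nabla\big(h(u,v)\phi\big)\,dx$, the desired right-hand member of the theorem. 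For the measure term I would write $h(u,v)\phi=c\,\phi+w$ with $w:=(h(u,v)-c)\phi$ supported in $\{u\le M\}\cap\{v\le M\}\cap K$, a set disjoint from $\{u=\infty\}$; the same dominated-convergence reasoning applied to the distributional equation satisfied by $u$ gives $\int_\Omega\phi\,d\mu_j=\int_\Omega g(|\nabla u_j|)\tfrac{\nabla u_j}{|\nabla u_j|}\cdot\nabla\phi\,dx\to\int_\Omega\phi\,d\mu$, while $\int_\Omega w\,d\mu_j=\int_\Omega w\,d\mu$ for $j>M$ once one knows that $\mu_j$ and $\mu$ coincide, up to a $\mu$- and $\mu_j$-null set, on the sublevel set $\{u\le M\}$ on which the truncation is inert. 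Combining these, the right-hand side tends to $c\int_\Omega\phi\,d\mu+\int_\Omega w\,d\mu=\int_\Omega h(u,v)\phi\,d\mu$, and comparing the two limits proves the theorem.

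The one genuinely delicate step is the convergence of $\int_\Omega w\,d\mu_j$, i.e.\ the agreement $\mu_j\lfloor\{u\le M\}=\mu\lfloor\{u\le M\}$ modulo a null set. This is where the hypotheses enter: $\mu$ may have a singular part which, by Lemma~\ref{lem-sec2}, is concentrated on the polar set $\{u=\infty\}$; the trapping $u,v\in\mathcal{S}_{\mu,r,L}$ controls $u$ near its finite values (keeping it from oscillating to $+\infty$ there) and forces $h(u,v)$ to collapse to the constant $c$ near the singular set, so that the ``dangerous'' part $c\,\phi$ of the test function is continuous — handled through the flux representation — while the quasicontinuous remainder $w$ is confined to a region where truncation changes nothing. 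Establishing the required agreement of the Riesz measures on sublevel sets is the potential-theoretic core of the proof; it follows the scheme of \cite{Kil11super}, the $(p,q)$-growth entering only through the elementary inequalities for $g$ and $G$ recorded above, and the two-variable formulation (involving $v$) is precisely what will be needed to deduce Theorem~\ref{th-re}.
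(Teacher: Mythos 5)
Your reduction of the left-hand side is sound: for $j>M$ the truncation $u_j$ is a bounded supersolution in $W^{1,q}_{\rm loc}$, its Riesz measure $\mu_j$ is diffuse and lies in the local dual space, $h(u_j,v)\phi=h(u,v)\phi$ is an admissible test function, and the convergence of the flux integral follows from $|\nabla u_j|\le|\nabla u|$, Proposition~\ref{prop-sec2-inte} and dominated convergence. The genuine gap is in the measure term. Your argument hinges on the identity $\mu_j|_{\{u\le M\}}=\mu|_{\{u\le M\}}$ (up to null sets) for $j>M$, which you yourself call ``the potential-theoretic core'' and do not prove. This identity is not a citable auxiliary fact: the statement that the Riesz measure of $T_k(u)$ decomposes as $\mu_0$ restricted to the sublevel set plus a remainder concentrated on $\{u\ge k\}$ converging to $\mu_s$ is precisely one of the Dal Maso--Murat--Orsina--Prignet equivalent formulations of being a renormalized solution, i.e.\ it is essentially the theorem you are trying to prove. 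Knowing that $u_j=u$ quasieverywhere on the (non-open, merely closed) set $\{u\le M\}$ gives no direct control on the second-order object $\mu_j$ there, so the identity cannot be waved through; your proposal therefore reduces the theorem to an unproven claim of comparable depth.

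The paper's proof avoids this identification entirely. It uses only the weak convergence $\mu[u_m]\rightharpoonup\mu$, and handles the singular part by a two-parameter cutoff: a function $\theta_\epsilon$ equal to $1$ near a compact carrier $K_\epsilon$ of all but $\epsilon$ of $\mu_s$ (where $h$ is already constant, so that contribution is computed exactly via the equation), and a function $\psi_\lambda$ built from an auxiliary superharmonic solution $\omega_\epsilon$ of the problem with datum $\mu|_{\Omega'\setminus K_\epsilon}$, whose energy on the slab $\{\lambda<\omega_\epsilon<2\lambda\}$ is controlled by Lemma~\ref{lem:sec4}. The hypothesis $u,v\in\mathcal{S}_{\mu,r,L}(\Omega')$ --- which your proposal never uses quantitatively, a warning sign --- is exactly what makes the level sets of $u$, $v$ and $\omega_\epsilon$ comparable and forces the error terms to vanish as $\lambda\to\infty$ and then $\epsilon\to0$. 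To repair your argument you would either have to prove the restriction identity for Riesz measures of truncations from scratch (redoing the hard part by other means) or adopt the cutoff scheme of the paper.
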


\begin{proof}
   Define $u_j=\min\{u,j\}$ for $j>0$. Let $k$ be large enough so that $h(u,v)=h(u_k,v_k)$. Let $\phi\in C^{\infty}_0(\Omega)$, and let $\Omega'\subset \Omega$ be a smooth domain such that  $\supp \phi$ belongs to $\Omega'$.

    Let $\epsilon>0$ and
    \begin{equation*}
        K_{\epsilon}\subset \{W_{\mu,1}=+\infty\} \cap \Omega'
    \end{equation*}
    be a compact set satisfying
    \begin{equation*}
        \mu_s(\Omega'\setminus K_\epsilon)<\epsilon.
    \end{equation*}
    Set
    \begin{equation*}
        r=\frac{1}{2}\min\left\{\text{dist}(\Omega',\partial\Omega), \text{dist}(K_\epsilon,\{\max(u,v)\leq k\})\right\}>0
    \end{equation*}
    and define
    \begin{equation*}
        S_{\epsilon}:= \{x\in \mathbb{R}^N:\text{dist}(x,K_{\epsilon})\leq r\}.
    \end{equation*}
    Take $\theta_{\epsilon}\in C_0^{\infty}\left( \{\min\{u,v\}>k\}
        \right)$ with $ 0\leq \theta_\epsilon \leq 1$ and
\begin{equation*}
    \theta_{\epsilon}=1 \quad  \text{on}\quad S_{\epsilon}.
\end{equation*}
 Notice that in the support of $\theta_{\epsilon}\phi$, $h(u,v)=h(k,k)$ is a constant. Define
\begin{equation*}
    \mu_\epsilon =\mu\big|_{\Omega'\setminus K_\epsilon}
\end{equation*}
as the restriction of $\mu$ to the set $\Omega'\setminus K_{\epsilon}$.  Note that for any Borel set $E$,
\[\mu_{\epsilon}(E)\leq \mu_0(E)+\epsilon.\]
It follows that
\begin{equation*}
    W_{\mu,r}=W_{\mu_\epsilon,r} \quad \text{in}\quad \Omega'\setminus S_{\epsilon},
\end{equation*}
which implies that for all $L\geq 0$,
\begin{equation*}
    \mathcal{S}_{\mu_\epsilon,r,L}(\Omega')\subset \mathcal{S}_{\mu,r,L}(\Omega'\setminus S_{\epsilon}).
\end{equation*}

Choose $R$ large enough such that $\Omega'\subset B(0,R)$, and let $\omega_\epsilon$ be a $q$-superharmonic renormalized solution to
\begin{align}\label{1.1}
\begin{cases}
-\DIV\left(|\nabla \omega_\epsilon|^{p-2}\nabla \omega_{\epsilon}+|\nabla \omega_{\epsilon}|^{q-2}\nabla \omega_{\epsilon}\right)=\mu_{\epsilon} \quad &\text{in } B(0,4R),\\
\omega_{\epsilon}=0&\text{on } \partial B(0,4R).
\end{cases}
	\end{align}
By Lemma~\ref{lem:sec4}, there exists a constant $\tilde{L}<\infty$ such that
\begin{equation*}
    \omega_{\epsilon}\in\mathcal{S}_{\mu_{\epsilon},r,\tilde{L}}(\Omega')\subset \mathcal{S}_{\mu,r,\tilde{L}}(\Omega'\setminus S_{\epsilon}),
\end{equation*}
and for
\begin{equation*}
    \psi_{\lambda}=\frac{(\min\{\omega_{\epsilon},2\lambda\}-\lambda)_+}{\lambda},
\end{equation*}
we have the estimate
\begin{equation}\label{eq-sec4-43}
    \begin{split}
&\int_{\Omega}\left(\frac{\nabla (\min\{\omega,2\lambda\}-\lambda)_+|^p}{\lambda^p}+\frac{|\nabla (\min\{\omega,2\lambda\}-\lambda)_+|^q}{\lambda^p}\right)\,dx\\
&\quad \leq C \lambda^{1-p} \mu_{\epsilon}\left(\left\{W_{\mu,r}>\frac{\lambda}{L}\right\}\cap B(0,R)\right)  \\
&\quad \leq C\lambda^{1-p}   \left( \mu_0\left(\left\{W_{\mu,r}>\frac{\lambda}{L}\right\}\cap B(0,R)\right)+\epsilon\right)
\end{split}
\end{equation}
and
\begin{equation}\label{eq-sec4-44}
    \begin{split}
&\int_{\Omega}\left(\frac{\nabla (\min\{\omega,2\lambda\}-\lambda)_+|^p}{\lambda^q}+\frac{|\nabla (\min\{\omega,2\lambda\}-\lambda)_+|^q}{\lambda^q}\right)\,dx\\
&\quad \leq C \lambda^{1-q} \mu_{\epsilon}\left(\left\{W_{\mu,r}>\frac{\lambda}{L}\right\}\cap B(0,R)\right) \\
&\quad \leq C\lambda^{1-q}   \left( \mu_0\left(\left\{W_{\mu,r}>\frac{\lambda}{L}\right\}\cap B(0,R)\right)+\epsilon\right)
\end{split}
\end{equation}
valid for all $\lambda \geq \tilde{L}$.

Moreover, the assumption of the theorem provides $L$ such that
\begin{equation*}
    u,v \in \mathcal{S}_{\mu,r,L}(\Omega'\setminus S_{\epsilon}).
\end{equation*}
Thus, we have
\begin{equation*}
    \omega_\epsilon,u,v\in \mathcal{S}_{\mu,r,\max\{L,\tilde{L}\}}(\Omega'\setminus S_{\epsilon}).
\end{equation*}
This implies that $\omega_\epsilon$, $u$ and $v$ are comparable.  Specifically, there is a constant $C<+\infty$ such that for all $\lambda>C$,
\begin{equation*}
    \{\max\{u,v\}>C\lambda\}\cap(\Omega'\setminus S_{\epsilon})\subset \{\omega_\epsilon>\lambda\}\cap(\Omega'\setminus S_{\epsilon})\subset \{\min\{u,v\}>C^{-1}\lambda\}\cap (\Omega'\setminus S_{\epsilon}).
\end{equation*}

We observe that by the choice of $\theta_\epsilon$, we have
\begin{equation}\label{eq-sec4-3.4}
    \begin{split}
        \int_{\Omega}h(u,v)\phi \theta_{\epsilon}\,d\mu&= h(k,k)\int_{\Omega}h\theta_{\epsilon}\,d\mu\\
        &=h(k,k)\int_{\Omega}\left(|\nabla u|^{p-2}\nabla u+|\nabla u|^{q-2}\nabla u\right)\cdot\nabla (\phi\theta_{\epsilon})\,dx\\
        &=\int_{\Omega}\left(|\nabla u|^{p-2}\nabla u+|\nabla u|^{q-2}\nabla u\right)\cdot\nabla (h(u,v)\phi\theta_{\epsilon} )\,dx.
    \end{split}
\end{equation}
Indeed, $\theta_{\epsilon}$ has been chosen so that its support does not intersect the support of $\nabla h $. Our goal is hence to show that
\begin{equation*}
    \left|\int_{\Omega}h(u,v)\phi(1-\theta_{\epsilon})\,d\mu- \int_{\Omega}(|\nabla u|^{p-2}\nabla u- |\nabla u|^{q-2}\nabla u)\cdot\nabla\left( h(u,v)\phi(1-\theta_{\epsilon})\right)\,dx
    \right|
\end{equation*}
is small by means of $\epsilon$, eventually leading to the result of the theorem.

To demonstrate this,  we employ the truncated equation for $u$, given by
\[-\DIV (|\nabla u_m|^{p-2}\nabla u_m+|\nabla u_m|^{q-2}\nabla u_m)=\mu[u_m],\qquad ~m\in \mathbb{N}.\]

Since both $u_k$ and $v_k$ are $q$-quasicontinuous and belong to $W^{1,q}(\Omega')$, we can find sequence of smooth functions $\{u_{k,j}\}$ and $\{v_{k,j}\}$ that converge to $u_k$ and $v_k$
in the $W^{1,q}(\Omega')$ norm and $q$-quasieverywhere, respectively. Specifically, $u_{k,j}\to u_k$ and $v_{k,j}\to v_{k}$ almost everywhere with respect to $\mu_0$. This immediately leads to the conclusion that $h(u_{k,j},v_{k,j})$ weakly converges to $h(u_k,v_k)$ in $W^{1,q}(\Omega')$. Note that by the choice of $k$, $h(u_k,v_k)=h(u,v)$.

Observe that
\begin{equation}\label{eq-sec4-36}
    \begin{split}
        &\int_{\Omega}h(u_{k,j},v_{k,j})(1-\theta_{\epsilon})\phi \,d\mu[u_m]\\
        &\quad= \int_{\Omega}\psi_{\lambda}h(u_{k,j},v_{k,j})(1-\theta_{\epsilon})\phi\,d\mu[u_m]+ \int_{\Omega}(1-\psi_{\lambda})h(u_{k,j},v_{k,j})(1-\theta_{\epsilon})\phi\,d\mu[u_m].
    \end{split}
\end{equation}

We start by examining the left-hand side of \eqref{eq-sec4-36}. Due to  the weak convergence of $\mu[u_m]$ to $\mu$, we obtain
\begin{equation*}
    \int_{\Omega} h(u_{k,j},v_{k,j})(1-\theta_{\epsilon})\phi\,d\mu[u_m] \to \int_{\Omega} h(u_{k,j},v_{k,j})(1-\theta_{\epsilon})\phi\,d\mu.
\end{equation*}
By the $q$-quasieverywhere convergence and the dominated convergence theorem, we conclude that
\begin{equation*}
    \int_{\Omega}h(u_{k,j},v_{k,j})(1-\theta_{\epsilon})\phi\,d\mu_0 \to \int_{\Omega}h(u,v)(1-\theta_{\epsilon})\phi\,d\mu_0
\end{equation*}
as $j\to \infty$.
Additionally, the estimate
\begin{equation*}
    \int_{\Omega} h(u_{k,j},v_{k,j})(1-\theta_{\epsilon})\phi\,d\mu_s \leq \epsilon \|h\|_{\infty}\|\phi\|_{\infty}
\end{equation*}
holds by Lemma~\ref{lem-sec2} and the choice of $\theta_{\epsilon}$. As a result, we derive the inequality
\begin{equation}
    \limsup_{j\to \infty}\left|\int_{\Omega}h(u_{k,j},v_{k,j})(1-\theta_{\epsilon})\phi\,d\mu- \int_{\Omega}h(u,v)(1-\theta_{\epsilon})\phi\,d\mu
    \right|\leq C\epsilon,
\end{equation}
where $C$ is independent of $\epsilon$. Thus, we have
\begin{equation}\label{eq-sec4-35}
\lim_{j\to\infty}\lim_{m\to\infty}\left|\int_{\Omega} h(u_{k,j},v_{k,j})(1-\theta_{\epsilon})\phi\,d\mu[u_m]-\int_{\Omega}h(u,v)(1-\theta_{\epsilon})\phi\,d\mu\right|\leq \epsilon.
\end{equation}

We now turn our attention to the right-hand side of \eqref{eq-sec4-36}. The first term on the right-hand side is estimated as follows:
\begin{equation}\label{eq-sec4-37}
\int_{\Omega}\psi_{\lambda}h(u_{k,j},v_{k,j})(1-\theta_{\epsilon})\phi\,d\mu[u_{m}] \leq \|h\|_{\infty}\int_{\Omega}\psi_{\lambda}(1-\theta_{\epsilon})\phi\,d\mu[u_m].
\end{equation}
Using the structure of the operator, we obtain the estimate
\begin{equation}\label{eq-sec4-38}
    \begin{split}
        &\int_{\Omega}\psi_{\lambda}(1-\theta_{\epsilon})\phi\,d\mu[u_m]\\
        &\quad=\int_{\Omega}(|\nabla u_m|^{p-2}\nabla u_m+|\nabla u_m|^{q-2}\nabla u_m) \cdot\nabla (\psi_{\lambda}(1-\theta_{\epsilon})\phi)\,dx\\
        &\quad\leq \|\phi\|_{\infty}\int_{\Omega'\setminus S_{\epsilon}}(|\nabla u_m|^{p-1}+|\nabla u_m|^{q-1})\cdot\nabla \psi_{\lambda}\,dx \\
        &\quad \quad + \|\nabla (\phi(1-\theta_{\epsilon}))\|_{\infty}\int_{\Omega'\cap \supp(\psi_\lambda)}(|\nabla u|^{p-1}+|\nabla u|^{q-1})\,dx.
    \end{split}
\end{equation}
Since $\omega_{\epsilon}, u\in \mathcal{S}_{\mu,r,C}(\Omega'\setminus S_{\epsilon})$, we have the bound
\begin{equation}\label{eq-sec4-39}
    u\leq C+cW_{\mu,R}\leq C+c^2\omega_{\epsilon} <C+2c^2\lambda \quad \text{in }  \{\omega_\epsilon<2\lambda\}\cap(\Omega'\setminus S_{\epsilon}),
\end{equation}
which implies that  $u\leq c\lambda$ in the intersection of the support of $\nabla \psi_{\lambda}$ and $\Omega'\setminus S_{\epsilon}$ for all sufficiently large $\lambda$.  In this region, $u_m=u_{c\lambda}$ for all $m>c\lambda$. It follows by H\"{o}lder's inequality  that
\begin{equation}\label{eq-sec4-310}
    \begin{split}
        &\int_{\Omega'\setminus S_{\epsilon}}
        (|\nabla u_m|^{p-2}\nabla u_m+|\nabla u_m|^{q-2}\nabla u_m)\cdot\nabla \psi_{\lambda}\,dx\\
    &\quad \leq \int_{\Omega'\setminus S_{\epsilon}}|\nabla u_{c\lambda}|^{p-1}|\nabla \psi_{\lambda}|\,dx+\int_{\Omega'\setminus S_{\epsilon}}|\nabla u_{c\lambda}|^{q-1}|\nabla \psi_{\lambda}|\,dx\\
    &\quad \leq \left(\int_{\Omega'\setminus S_{\epsilon}}|\nabla u_{c\lambda}|^p\,dx\right)^{\frac{p-1}{p}} \left(\int_{\Omega'}|\nabla \psi_{\lambda}|^p\,dx\right)^{\frac{1}{p}}+\left(\int_{\Omega'\setminus S_{\epsilon}}|\nabla u_{c\lambda}|^q\,dx\right)^{\frac{q-1}{q}}\left(\int_{\Omega'}|\nabla \psi_{\lambda}|^q\,dx\right)^{\frac{1}{q}}.
    \end{split}
\end{equation}
By \eqref{eq-sec4-43} and \eqref{eq-sec4-44}, we obtain
\begin{equation*}
\int_{\Omega'}|\nabla \psi_{\lambda}|^p\,dx \leq C\lambda^{1-p}\left( \mu_0\left(\left\{W_{\mu,r}>\frac{\lambda}{L}\right\}\cap B(0,R)\right)+\epsilon\right)
\end{equation*}
and
\begin{equation*}
\int_{\Omega'}|\nabla \psi_{\lambda}|^q\,dx \leq C\lambda^{1-q}\left( \mu_0\left(\left\{W_{\mu,r}>\frac{\lambda}{L}\right\}\cap B(0,R)\right)+\epsilon\right)
\end{equation*}
for some $\lambda>\tilde{L}$.
Moreover, similar to the proof of~\cite[Theorem 1.13]{Kil92Degen}, we deduce that
\begin{equation}
    \int_{\Omega'\cap\{u\leq \lambda\}}(|\nabla u|^p+|\nabla u|^q)\,dx \leq C\lambda,
\end{equation}
where $C$ is independent of $\lambda$. Thus, it follows from \eqref{eq-sec4-310}  that
\begin{equation}\label{eq-sec4-3100}
\begin{split}
 & \int_{\Omega'\setminus S_{\epsilon}}
        (|\nabla u_m|^{p-2}\nabla u_m+|\nabla u_m|^{q-2}\nabla u_m)\cdot\nabla \psi_{\lambda}\,dx \\
        &\quad \leq C\lambda^{\frac{p-1}{p}}\lambda^{\frac{1-p}{p}}\left(\mu_0\left(\left\{W_{\mu,r}>\frac{\lambda}{L}\right\}\cap \Omega'\right)+\epsilon\right)^{\frac{1}{p}}\\
        &\quad\quad +C\lambda^{\frac{q-1}{q}}\lambda^{\frac{1-q}{q}}\left(\mu_0\left(\left\{W_{\mu,r}>\frac{\lambda}{L}\right\}\cap \Omega'\right)+\epsilon\right)^{\frac{1}{q}} \\
        &\quad \to  C\epsilon^{\frac{1}{p}}+C\epsilon^{\frac{1}{q}}
\end{split}
\end{equation}
as $\lambda\to \infty$ since $\text{cap}_q\left(\{W_{\mu,R}>\frac{\lambda}{C}\}\cap \Omega'\right)\to 0$. Furthermore, the local summability of $|\nabla u|^{q-1}$ implies that
\begin{equation}\label{eq-sec4-312}
    \int_{\Omega'\cap \supp(\psi_{\lambda})}(|\nabla u|^{p-1}+|\nabla u|^{q-1})\,dx \to 0
\end{equation}
as $\lambda\to \infty$ since $\text{cap}_q\left(\{\psi_{\lambda}>0\}\cap \Omega'\right)\to 0$.
By substituting the estimates from~\eqref{eq-sec4-3100} and \eqref{eq-sec4-312} into~\eqref{eq-sec4-38} and then using \eqref{eq-sec4-37}, we arrive at
\begin{equation}
    \lim\limits_{\lambda,j,m\to \infty}\left|\int_{\Omega}h(u_{k,j},v_{k,j})\psi_{\lambda}(1-\theta_{\epsilon})\phi\,d\mu[u_m]\right|\leq C\epsilon.
\end{equation}
Hence by~\eqref{eq-sec4-36} and \eqref{eq-sec4-35},
\begin{equation}\label{eq-sec4-314}
\lim\limits_{\lambda,j,m\to\infty}\left|\int_{\Omega}
h(u_{k,j},v_{k,j})(1-\psi_{\lambda})(1-\theta_{\epsilon})\phi\,d\mu[u_m]-\int_{\Omega}h(u,v)(1-\theta_{\epsilon})\phi\,d\mu\right|\leq C\epsilon,
\end{equation}
where $C$ is independent of $\epsilon$.

Next, we consider the first term on the left-hand side in \eqref{eq-sec4-314}. By~\eqref{eq-sec4-39} we have that $(1-\psi_{\lambda})(1-\theta_{\epsilon})\phi$ vanishes outside $\{u\leq c\lambda\}\cap (\Omega'\setminus S_{\epsilon})$ for all sufficiently large $\lambda$. Hence, for all $m\geq c\lambda$,
\begin{equation*}
    \begin{split}
        &\int_{\Omega} h(u_{k,j},v_{k,j})(1-\psi_{\lambda})(1-\theta_{\epsilon})\phi\,d\mu[u_m]\\
        &\quad=\int_{\Omega'\setminus S_{\epsilon}}g(|\nabla u_{c\lambda}|)\frac{\nabla u_{c\lambda}}{|\nabla u_{c\lambda}|}\cdot\nabla \phi h(u_{k,j},v_{k,j})(1-\theta_{\epsilon})(1-\psi_{\lambda})\,dx\\
        &\quad\quad+\int_{\Omega'\setminus S_{\epsilon}}g(|\nabla u_{c\lambda}|)\frac{\nabla u_{c\lambda}}{|\nabla u_{c\lambda}|}\cdot\nabla(1-\theta_{\epsilon})(1-\psi_{\lambda})\phi h(u_{k,j},v_{k,j})\,dx\\
        &\quad\quad+\int_{\Omega'\setminus S_{\epsilon}}g(|\nabla u_{c\lambda}|)\frac{\nabla u_{c\lambda}}{|\nabla u_{c\lambda}|}\cdot\nabla h(u_{k,j},v_{k,j}) (1-\theta_{\epsilon})(1-\psi_{\lambda})\phi \,dx\\
       &\quad\quad-\int_{\Omega'\setminus S_{\epsilon}}g(|\nabla u_{c\lambda}|)\frac{\nabla u_{c\lambda}}{|\nabla u_{c\lambda}|}\cdot\nabla \psi_{\lambda}
       h(u_{k,j},v_{k,j}) (1-\theta_{\epsilon})\phi \,dx.
    \end{split}
\end{equation*}

We now take the limits of the above expression with respect to $m$, $j$, and $\lambda$ in sequence, starting with $m\to\infty$, followed by $j\to \infty$ and finally $\lambda\to \infty$.

First, it followed  by the dominated convergence theorem that
\begin{equation*}
    \begin{split}
        &\lim_{\lambda\to\infty}\lim_{j\to \infty}\lim_{m\to\infty}\int_{\Omega'\setminus S_{\epsilon}}g(|\nabla u_{c\lambda}|)\frac{\nabla u_{c\lambda}}{|\nabla u_{c\lambda}|}\cdot \nabla \phi h(u_{k,j},v_{k,j})(1-\theta_{\epsilon})(1-\psi_{\lambda})\,dx\\
        &\quad=\int_{\Omega}g(|\nabla u|)\frac{\nabla u}{|\nabla u|}\cdot\nabla \phi h(u,v)(1-\theta_{\epsilon})\,dx
    \end{split}
\end{equation*}
and
\begin{equation*}
    \begin{split}
        &\lim_{\lambda\to\infty}\lim_{j\to \infty}\lim_{m\to\infty}\int_{\Omega'\setminus S_{\epsilon}}g(|\nabla u_{c\lambda}|)\frac{\nabla u_{c\lambda}}{|\nabla u_{c\lambda}|}\cdot\nabla (1-\theta_{\epsilon}) \phi h(u_{k,j},v_{k,j})(1-\psi_{\lambda})\,dx\\
        &\quad=\int_{\Omega}g(|\nabla u|)\frac{\nabla u}{|\nabla u|}\cdot\nabla (1-\theta_{\epsilon}) \phi h(u,v)\,dx.
    \end{split}
\end{equation*}

Second, the weak convergence of $\nabla h(u_{k,j},v_{k,j})$ to $\nabla h(u_k,v_k)=h(u,v)$ together with the dominated convergence gives
\begin{equation*}
    \begin{split}
        &\lim_{\lambda\to\infty}\lim_{j\to \infty}\lim_{m\to\infty}\int_{\Omega'\setminus S_{\epsilon}}g(|\nabla u_{c\lambda}|)\frac{\nabla u_{c\lambda}}{|\nabla u_{c\lambda}|}\cdot\nabla h(u_{k,j},v_{k,j})  (1-\theta_{\epsilon}) \phi (1-\psi_{\lambda})\,dx\\
        &\quad=\int_{\Omega}g(|\nabla u|)\frac{\nabla u}{|\nabla u|}\cdot\nabla h(u,v) (1-\theta_{\epsilon}) \phi \,dx.
    \end{split}
\end{equation*}

Third, estimating as \eqref{eq-sec4-38} and \eqref{eq-sec4-3100}, we have
\begin{equation*}
    \begin{split}
        \left| \int_{\Omega'\setminus S_{\epsilon}}g(|\nabla u_{c\lambda}|)\frac{\nabla u_{c\lambda}}{|\nabla u_{c\lambda}|}\cdot\nabla \psi_{\lambda} h(u_{k,j},v_{k,j})  (1-\theta_{\epsilon}) \phi\,dx
        \right| &\leq C\|h\|_{\infty}\|\phi\|_{\infty}\int_{\Omega'}g(|\nabla u_{c\lambda}|)|\nabla \psi_{\lambda}|\,dx\\
        &\leq C\epsilon,
    \end{split}
\end{equation*}
which readily implies
\begin{equation*}
    \limsup_{\lambda,j,m\to\infty}\left|  \int_{\Omega'\setminus S_{\epsilon}}g(|\nabla u_{c\lambda}|)\frac{\nabla u_{c\lambda}}{|\nabla u_{c\lambda}|}\cdot\nabla \psi_{\lambda} h(u_{k,j},v_{k,j})  (1-\theta_{\epsilon}) \phi\,dx
        \right| \leq C\epsilon.
\end{equation*}

Inserting above estimates into \eqref{eq-sec4-314} we infer that
\begin{equation*}
    \left| \int_{\Omega}h(u,v)(1-\theta_{\epsilon})\phi\,d\mu-\int_{\Omega}g(|\nabla u|)\frac{\nabla u}{|\nabla u|}\cdot\nabla (h(u,v)(1-\theta_{\epsilon})\phi)\,dx\right|\leq C\epsilon.
\end{equation*}
This together with \eqref{eq-sec4-3.4} yields
\begin{equation*}
    \left|\int_{\Omega}h(u,v)\phi\,d\mu-\int_{\Omega}g(|\nabla u|)\frac{\nabla u}{|\nabla u|}\cdot\nabla (h(u,v)\phi)\,dx
    \right|\leq C\epsilon,
\end{equation*}
concluding the proof after letting $\epsilon\to 0$.
\end{proof}

Once Theorem \ref{Th32} is proved, Theorem~\ref{th-re} follows.

 \begin{proof}[Proof of Theorem~\ref{th-re}]
     The proof is completed by choosing $u=v$ in Theorem~\ref{Th32}.
 \end{proof}

\subsection*{Acknowledgment}
 This work was supported by the Postdoctoral Fellowship Program of CPSF under Grant Number: GZC20242215 and the National Natural Science Foundation of China (No. 12471128).

\subsection*{Conflict of interest} The authors declare that there is no conflict of interest. We also declare that this
manuscript has no associated data.

\subsection*{Data availability} Data sharing is not applicable to this article as no datasets were generated or analysed
during the current study.


\end{document}